\newcommand{\norm}[1]{\left \| #1 \right \|}
\newcommand{\inclu}[0] {\ar@{^{(}->}}
\newcommand{\spann}{\text{span}}
\newcommand{\gph}{{\rm gph}\,}
\newcommand{\diag}{{\rm diag}}
\newcommand{\dist}{{\rm dist}}
\newcommand{\cS}{\mathcal{S}}
\newcommand{\EE}{\mathbb{E}}
\newcommand{\lip}{\mathrm{lip}}
\newcommand{\RR}{\mathbb{R}}
\newcommand{\rank}{\mathrm{rank}}
\newcommand{\supp}{{\mathrm{supp}}} 
\newcommand{\op}{\mathrm{op}}
\renewcommand{\SS}{{\mathbb{S}}}
\newcommand{\abs}[1]{\left| #1 \right|}
\newcommand{\argmin}{\operatornamewithlimits{argmin}}
\newtheorem{thm}{Theorem}[section]
\newtheorem{theorem}{Theorem}[section]
\newtheorem{proposition}[thm]{Proposition}
\newtheorem{lemma}[thm]{Lemma}
\newtheorem{claim}{Claim}
\newcommand{\cO}{\mathcal{O}}
\DeclarePairedDelimiter{\dotp}{\langle}{\rangle}
  \title{The nonsmooth landscape of blind deconvolution}
  \author{Mateo D\'iaz\thanks{\texttt{people.cam.cornell.edu/md825/}} \\ 
  Center for Applied Mathematics\\
  Cornell University} 
\begin{document}

\maketitle

\begin{abstract}
The blind deconvolution problem aims to recover a rank-one matrix from a set of rank-one linear measurements. Recently, Charisopulos et al. \cite{charisopoulos2019composite} introduced a nonconvex nonsmooth formulation that can be used, in combination with an initialization procedure, to provably solve this problem under standard statistical assumptions. In practice, however, initialization is unnecessary. As we demonstrate numerically, a randomly initialized subgradient method consistently solves the problem. In pursuit of a better understanding of this phenomenon, we study the random landscape of this formulation. We characterize in closed form the landscape of the population objective and describe the approximate location of the stationary points of the sample objective. In particular, we show that the set of spurious critical points lies close to a codimension two subspace. In doing this, we develop tools for studying the landscape of a broader family of singular value functions, these results may be of independent interest.
\end{abstract}

\section{Introduction}

An increasing amount of research has shown how matrix recovery problems, which in the worst case are hard, become tractable under appropriate statistical assumptions. Examples include phase retrieval \cite{wirt_flow,waldspurger2018phase,eM}, blind deconvolution \cite{li2016rapid,charisopoulos2019composite}, matrix sensing \cite{park2016non,MR3480732}, matrix completion \cite{MR3565131,mat_comp_min}, and robust PCA \cite{rob_cand,yi2016rpca}, among others \cite{boumal2016non,li2018nonconvex_robust,charisopoulos2019low,jusunblog}. Convex relaxations have proven to be a great tool to tackle these problems, but they often require lifting the problem to a higher dimensional space and consequently end up being computationally expensive. Thus, focus has shifted back to iterative methods for nonconvex formulations that operate in the natural parameter space. One of the difficulties of nonconvex optimization is that, in general, it is hard to find global minimizers. To overcome this issue, recent works have suggested two stage methods: One starts by running an \emph{initialization procedure} -- usually based on spectral techniques -- and then refines the solution by warm-starting a \emph{local search method} that minimizes a nonconvex formulation. This thread of ideas has proven very successful, and we refer the reader to \cite{chi2018nonconvex} for a survey.

Initialization procedures are nontrivial to develop and can sometimes more expensive than the refinement stage. Thus, it is important to understand when initialization methods are superfluous. There are iterative methods, for specific problems, that provably converge to minimizers \cite{mat_comp_min,ge2017unified,boumal2016non,cifuentes2019burer,lee2016gradient}. Analysis of these methods are of two types: those based on studying the iterate sequence \cite{kwon2018global,ding2018leave,zhong2018near}, and those based on characterizing the landscape of smooth loss functions \cite{ge2017unified, phase_nonconv,ling2018landscape}. 

In this work, we study the landscape of a nonsmooth nonconvex formulation \eqref{p:1} for the (real) \emph{blind deconvolution} problem. Unlike the aforementioned works, we consider a nonsmooth loss, which presents fundamentally different technical challenges. We show that, as the number of measurements grow, the set of spurious stationary points converges to a codimension two subspace. This suggests that there is an extensive region with friendly geometry. 

The blind deconvolution problem aims to recover a pair of real vectors $(\bar w, \bar x) \in \RR^{d_1} \times \RR^{d_2}$ from a set of $m$ observations given by 
\begin{equation}
\label{eq:measurements}
y_i = \dotp{a_i, \bar w} \dotp{\bar x, b_i}\qquad \text{for all } i \in{1, \dots, m}
\end{equation} 
where $a_i$ and $b_i$ are known vectors for all indices. This problem has important applications in a variety of different fields, we describe two below.
\begin{enumerate}
\item[]\textbf{Signal processing.} The complex analogue of this problem is intimately linked to the problem of recovering a pair of vectors $(u,v)$ from the convolution $(Au) \ast (Bv)$, where $A$ and $B$ are tall-skinny matrices. In fact, when passed to the frequency domain, this problem becomes equivalent to the one mentioned above. This problem has applications in image deblurring and channel protection with random codes \cite{ahmed2014blind,phase_nonconv}. 
\item[]\textbf{Shallow neural networks.} Solving this problem is equivalent to learning the weights of a shallow neural network with bilinear activation functions. Taking $\big\{((a_i,b_i), y_i)\big\}$ as training data, writing the output of the network as $y = \sigma( a^\top w, b^\top x)$, with $(w,x) \in \RR^{d_1 \times d_2}$, and the setting the activation function to $\sigma(z_1, z_2) = z_1 z_2.$
\end{enumerate}

To tackle the blind deconvolution problem, \cite{charisopoulos2019composite} proposed the following nonconvex nonsmooth formulation 
\begin{equation} \label{p:1}
\argmin_{w, x} f_S(w,x) \triangleq \frac{1}{m} \sum_{i=1}^m |\dotp{a_i, w} \dotp{x, b_i} - y_i|. 
\end{equation}
The authors of \cite{charisopoulos2019composite} designed a two-stage
method based on this formulation and showed that if the measuring
vectors, $a_i$ and $b_i,$ are i.i.d standard Gaussian, then their
algorithm converges rapidly to a solution whenever
$m \gtrsim (d_1+d_2).$ \footnote{This is information-theoretically
  optimal up to constants \cite{choudhary2014sparse,kech2017optimal}.}
Nonetheless, experimentally the initialization stage seems to be
superfluous. Indeed, a simple randomly-initialized subgradient
algorithm is successful at solving the problem most of the time
provided that $m/(d_1+d_2)$ is big enough, see the experiments in the
last section for support of this claim.

\subsection{Main contributions}
Aiming to get a better understanding of the
high-dimensional geometry of $f_S$, we study the landscape of $f_S$
when $A$ and $B$ are standard Gaussian random matrices. Following the
line of ideas in \cite{davis2017nonsmooth}, we think of $f_S$ as the
empirical average approximation of the \emph{population objective}
\begin{equation*}
f_P(w,x) \triangleq \EE f_S(w,x) = \EE \left( | a^\top(wx - \bar w \bar x) b^\top|\right),
\end{equation*}
where $a \in \RR^{d_1}$ and $b \in \RR^{d_2}$ are standard Gaussian vectors. From now on, we will refer to $f_S$ as the \emph{sample objective.}
The rationale is simple: we will describe the stationary points of $f_P$, then we will
prove that the graph of the subdifferential $\partial f_S$ concentrates around the graph of $\partial f_P$ and 
combine these to describe the landscape of $f_S.$\footnote{We will give a formal definition of $\partial f$ in Section~\ref{sec:notation}.} This strategy allows us to show that the set of spurious stationary points converges to a 
codimension two subspace at a controlled rate. We remark that these results are geometrical and not computational. 

Before we go on, let us observe that one can only wish to recover the
pair $(\bar w, \bar x)$ up to scaling. In fact, the measurements
\eqref{eq:measurements} are invariant under the mapping
$(w, x) \mapsto (\alpha w, x/\alpha)$ for any $\alpha \neq 0.$ Hence
the set of solutions of the problems is defined as
$$\cS \triangleq \left\{\left(\alpha \bar w, \bar x /\alpha\right) \mid
  \alpha \in \RR \setminus \{0\}\right\}.$$

\paragraph{Population objective.} Interestingly, the population objective only depends on $(w,x)$ through the 
singular values of the rank two matrix $X := wx^\top - \bar w \bar x^\top.$ We show this function can be written as 
\begin{equation*}
f_P(w,x) =  \sigma_{\max}(X) \sum_{n=0}^\infty \left(\frac{(2n)!}{2^{2n}(n!)^2}\right)^2\frac{
\left(1- \kappa^{-2}(X) \right)^n}{1-2n}
\end{equation*}
where $\kappa(X) = {\sigma_{\max}(X)}/{\sigma_{\min}(X)}$ is the condition number of $X.$ We characterize the stationary points of a broad family of spectral functions, containing $f_P$. By specializing this characterization we find that the stationary points of the population objectives are exactly
\[\cS\cup \{(w,x) \mid  \dotp{w, \bar w} = 0, \dotp{x, \bar x} = 0, \text{and } wx^\top =0\},\]
revealing that the set of extraneous critical points of $f_P$ is the subspace $(\bar w, 0)^\perp \cap (0, \bar x)^\perp.$

\paragraph{Sample objective.} Equipped with a quantitative version of Attouch-Wets' convergence theorem proved in \cite{davis2017nonsmooth}, we show that with high probability any stationary point of $f_S$ in a bounded set satisfies at least one of the following
\[\|(w,x)\| \leq \Delta \|(\bar w,\bar x)\|,\;\; \|wx^\top - \bar w \bar x^\top\| \leq \Delta \|\bar w \bar x^\top\|, \;\; \text{or} \;\;\left\{
    \begin{array}{ll}
  |\dotp{w,\bar w}| &\leq \Delta \|(w,x)\| \|\bar w\|, \\
  |\dotp{x,\bar x}| &\leq \Delta \|(w,x)\| \|\bar x\|. 
  \end{array}
  \right.\]
provided that $m \gtrsim d_1 + d_2,$ where $\Delta = \widetilde \cO\left(\frac{d_1+d_2}{m} \right)^{\frac{1}{8}}$.\footnote{ Where $\widetilde \cO$ hides logarithmic terms.} Intuitively this means, that as the ratio $(d_1+d_2)/m$ goes to zero, the stationary points lie closer and closer to three sets: the singleton zero, the set of solutions $\cS$, and the subspace $(\bar w, 0)^\perp \cap (0, \bar x)^\perp.$

\subsection{Related work}
There is a vast recent literature on blind deconvolution. A variety of algorithmic solutions have been proposed, including convex relaxations \cite{ahmed2014blind,ahmed2018blind}, Riemannian optimization methods \cite{HuangHand17}, gradient descent algorithms \cite{li2016rapid,ma2017implicit},  and nonsmooth procedures \cite{charisopoulos2019composite}. Related to this work, the authors of \cite{zhang2017global,kuo2019geometry} studied variations of the blind deconvolution problem via landscape analysis; their approach is based on smooth formulations and therefore their tools are of a different nature. Besides algorithms, researchers have also been interested in information-theoretical limits of the problem under different assumptions\cite{choudhary2014sparse,li2016identifiability,kech2017optimal}.  

On the other hand, the study of the high-dimensional landscape of
nonconvex formulations is an emergent area of research. Examples for
smooth formulations include the analysis for phase retrieval
\cite{phase_nonconv}, matrix completion \cite{mat_comp_min}, robust
PCA \cite{ge2017unified}, and synchronization networks
\cite{ling2018landscape}. The majority of these results focus on using
second order information to show that under reasonable assumptions the
formulations do not exhibit spurious stationary points. The machinery
developed for nonsmooth formulations is based on different ideas and
is more case-oriented. Despite there are remarkable examples
\cite{davis2017nonsmooth,josz2018theory,fattahi2018exact,bai2018subgradient}. Closer to our
work is the paper \cite{davis2017nonsmooth}; the authors of
this article studied a similar nonsmooth formulation for the phase
retrieval problem, which can be regarded as a symmetric analogue of
blind deconvolution.
\subsection{Outline}
The agenda of this paper is as follows: Section~\ref{sec:notation} introduces notation and some basic results we require. Sections~\ref{sec:population} and~\ref{sec:sample} present the results on the landscape of the population and sample objectives, respectively. In Section~\ref{sec:experiments}, we present computational experiments corroborating the conclusions of our theory. We close with a brief discussion and future research directions in Section~\ref{sec:dis}. Many of the arguments are technical and consequently we defer most of the proofs to the appendices.

\section{Preliminaries} \label{sec:notation}

We will follow standard notation. The symbols $\RR$ and $\RR_+$ denote the real line and the nonnegative reals, respectively. The set of extended reals $\RR \cup \{+\infty\}$ is written as $\overline \RR.$ We always endow $\RR^d$ with its standard inner product, $\dotp{x,y} = x^\top y$, and its induced norm $\|x\| = \sqrt{\dotp{x,x}}$. We also use $\|x\|_1 = \sum |x_i|$ to denote the $\ell_1$-norm. 
For a set $S \subseteq \RR^d$, we denote the distance from a point $x$ to the set by $\dist(x,Q) = \inf_{y \in Q} \|x - y\|$. For any pair of real-valued functions $f, g: \RR^n \rightarrow \RR$, we say that $f \lesssim g $ if there exists a constant $C$ such that $f \leq C g.$ Moreover, we write $f \asymp g$ if both $f \lesssim g$ and $g \lesssim f.$ 

The adjoint of a linear operator $A : \RR^d \rightarrow \RR^n$ is indicated by 
$A^\top: \RR^n \rightarrow \RR^d.$ Assuming $d\leq n$, the 
map $\sigma: \RR^{n \times d} \rightarrow \RR^{d}_+$ returns the vector of 
ordered singular values of a matrix with $\sigma_1(A) \geq \sigma_2(A) \geq  
\dots \geq \sigma_{d}(A)$. We will use the symbols $\|A\|_\op = \sigma_1(A)$ 
and $\|A\|_F = \|\sigma(A)\|_2$ to indicate the operator and Frobenius norm, respectively. When 
not specified it is understood that $\|A\| := \|A\|_\op.$ We will use the symbol $O(d)$ to denote the set of $d \times d$ orthogonal matrix.
\paragraph{Variational analysis.} Since we will handle nonsmooth functions, we need a definition of generalized derivatives. We refer the interested reader to some excellent references on the subject \cite{RW98,mord1,Borwein-Zhu}. Let $f: \RR^d \rightarrow \overline \RR$ be a lower semicontinuous proper function and $\bar x$ be a point. The \emph{Fr\'echet subdifferential} $\widehat \partial f(x)$ is the set of all vectors $\xi$ for which 
\[f(x) \geq f(\bar x) + \dotp{\xi, x- \bar x} + o(\|x-\bar x\|)\qquad \text{as }x\rightarrow \bar x. \]
Intuitively, $\xi \in \partial f(\bar x)$ if the function $x\mapsto f(\bar x) + \dotp{\xi, x- \bar x}$ locally minorizes $f$ up to first order information. Unfortunately, the set-valued mapping $x \mapsto \widehat \partial f(x)$ lacks some desirable topological properties. For this reason it is useful to consider an extension. The \emph{limiting subdifferential} $\partial f (\bar x)$ is the set of all $\xi$ such that there are sequences $(x_n)$ and $(\xi_n)$ with $\xi_n \in \widehat \partial f(x_n)$ satisfying $(x_n, f(x_n), \xi_n) \rightarrow (\bar x, f( \bar x), \xi)$. It is well-known that $\partial f( \bar x)$ reduces to the classical derivative when $f$ is Fr\'echet differentiable and that for $f$ convex, $\partial f(\bar x)$ is equal to the usual convex subdifferential
\[\xi \in \partial f( \bar x) \qquad \iff \qquad  f(x) \geq f(\bar x) + \dotp 
{\xi , x - \bar x} \;\;\; \forall x.\]We say that a point $\bar x$ is 
\emph{stationary} if $0 \in \partial f(\bar x).$ The \emph{graph} of $\partial 
f$ is given by $\gph \partial f = \{(x,\zeta) \mid \zeta \in \partial f(x)\}.$ 

For $\rho > 0,$ we say that $f$ is $\rho$-weakly convex if the regularized function $f + \frac{\rho}{2}\|\cdot\|^2_2$ is convex. This encompasses a broad class of functions: Any function that can be decomposed as $f = h \circ g$, where $h : \RR^m \rightarrow \RR$ is a Lipschitz convex function and $g : \RR^d \rightarrow \RR^m$ is smooth map, is weakly convex. It is worth noting that for functions that can be decomposed in this fashion, the chain rule \cite[Theorem 10.6]{RW98} yields $\partial f(x) = \nabla g(x)^\top \partial h ( g(x))$ for all $x.$   

\paragraph{Singular value functions.} For a pair of dimensions $d_1, d_2$ we will denote $d = \min\{d_1, d_2\}.$ A function $f: \RR^d \rightarrow \RR$ is \emph{symmetric} if $f(\pi x) = f(x)$ for any permutation matrix $\pi\in \{0,1\}^{d \times d}$. Additionally, a function $f$ is \emph{sign invariant} if $f(s x) = f(x)$ for any diagonal matrix $s \in \{-1,0,1\}^{d \times d}$ with diagonal entries in $\{\pm 1\}.$ We say that $f_\sigma : \RR^{d_1 \times d_2} \rightarrow \RR$ is a \emph{singular value function} if it can be decomposed as $f_\sigma = (f \circ \sigma)$ for a symmetric sign invariant function $f.$ A simple and illuminating example is the Frobenius norm, since $\|A\|_F = \| \sigma(A)\|_2.$ This type of function has been deeply studied in variational analysis \cite{der,eval,lewis2005nonsmooth}.

A pair of matrices $X$ and $Y$ in $\RR^{d_1 \times d_2}$ have a \emph{simultaneous ordered singular value decomposition} if there exist matrices $U \in O(d_1)$ and $V \in O(d_2)$ such that $X = U \diag(\sigma(X)) V^\top$ and $Y = U \diag(\sigma(Y)) V^\top.$ We will make great use of the following remarkable theorem. 

\begin{theorem}[Theorem 7.1 in \cite{lewis2005nonsmooth}]\label{thm:spectral_sub}
The limiting subdifferential of a singular value function $f_\sigma = f \circ \sigma$ at a matrix $M \in \RR^{d_1 \times d_2}$ is given by 
\begin{equation}
\partial f_\sigma (M) = \{U \diag (\zeta) V^\top \mid \zeta \in \partial f (\sigma(M)) \text{ and } U\diag \left(\sigma(M)\right) V^\top = M \}.
\end{equation}
Hence $M$ and any of its subgradients have simultaneous ordered singular value decomposition.
\end{theorem}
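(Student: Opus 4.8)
The plan is to reduce the singular-value statement to the (already established) eigenvalue statement for symmetric matrices by the Jordan--Wielandt dilation. Consider the injective linear embedding $\Phi : \RR^{d_1 \times d_2} \to \SS^{d_1+d_2}$ defined by $\Phi(M) = \matrx{ 0 & M \\ M^\top & 0}$. Using an SVD $M = U_0 \diag(\sigma(M)) V_0^\top$, one checks directly that the spectrum of $\Phi(M)$ is $\{\pm\sigma_1(M), \dots, \pm\sigma_d(M)\}$ together with $|d_1-d_2|$ zeros, and that $\Phi(M)$ is diagonalized by an orthogonal matrix assembled in an explicit block pattern out of $U_0$ and $V_0$; conversely, any orthogonal $W$ diagonalizing $\Phi(M)$ with the eigenvalues listed in the matching order must have this block pattern, up to the usual freedom inside eigenspaces, and reading off its two blocks returns a pair $(U,V)$ realizing a \emph{simultaneous ordered} SVD of $M$. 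Since $f$ is symmetric and sign invariant, one can fix a permutation-invariant $g : \RR^{d_1+d_2} \to \RR$ --- arranged so that it only sees the nonnegative half of the spectrum --- with $g(\lambda(\Phi(M))) = f(\sigma(M))$ for every $M$; equivalently $f_\sigma = (g\circ\lambda)\circ\Phi$.

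Granting that the limiting-subdifferential chain rule holds with equality here, we get $\partial f_\sigma(M) = \Phi^*\bigl(\partial(g\circ\lambda)(\Phi(M))\bigr)$, where $\Phi^*$ extracts the off-diagonal block of a symmetric matrix. Substituting the known spectral formula for symmetric functions of eigenvalues,
\[
\partial(g\circ\lambda)(A) = \set{ W\diag(\nu)W^\top \mmid \nu \in \partial g(\lambda(A)),\ W\diag(\lambda(A))W^\top = A }
\]
at $A = \Phi(M)$, and plugging in the block description of the admissible $W$'s, the off-diagonal block of $W\diag(\nu)W^\top$ comes out as $U\diag(\zeta)V^\top$ with $U\diag(\sigma(M))V^\top = M$, where $\zeta$ is read off from $\nu$ via the same collapse that sends $\lambda(\Phi(M))$ to $\sigma(M)$. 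A short computation using the permutation- and sign-invariance of $g$ shows that $\partial g$ at $\lambda(\Phi(M))$ pushes forward exactly onto $\partial f$ at $\sigma(M)$ under this collapse, so the vectors $\zeta$ that arise are precisely the elements of $\partial f(\sigma(M))$. Chaining the identifications gives the stated formula, and the ``hence'' clause is then automatic: $\diag(\zeta)$ and $\diag(\sigma(M))$ carry the trivial simultaneous ordered SVD, which transports to $M$ and its subgradients via $(U,V)$.

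To avoid the chain-rule subtlety entirely, I would instead argue directly. Orthogonal invariance $f_\sigma(PMQ^\top) = f_\sigma(M)$ upgrades --- by composing with the linear isometries $M \mapsto PMQ^\top$ --- to the equivariance $\partial f_\sigma(PMQ^\top) = P\,\partial f_\sigma(M)\,Q^\top$, which reduces the whole statement to a rectangular diagonal $M = \diag(\sigma(M))$. At such a matrix, probing the Fr\'echet subgradient inequality along rotations $Z = e^{tA}\diag(\sigma(M))e^{-tB}$ with skew $A,B$ forces each $\xi \in \widehat\partial f_\sigma(\diag(\sigma(M)))$ to be block-diagonal with respect to the blocks of equal singular values, and a direct computation then matches the surviving diagonal part with $\widehat\partial f(\sigma(M))$; von Neumann's trace inequality and its equality case play the analogous role when one verifies the reverse inclusion. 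Finally, one promotes the statement from the regular to the limiting subdifferential by a routine argument: given $\xi_n \in \widehat\partial f_\sigma(M_n)$ with $(M_n, f_\sigma(M_n),\xi_n) \to (M, f_\sigma(M),\xi)$, extract convergent SVD factors $U_n \to U$ and $V_n \to V$ using compactness of $O(d_1)\times O(d_2)$, apply the Fr\'echet case to write $\xi_n = U_n\diag(\zeta_n)V_n^\top$ with $\zeta_n \in \widehat\partial f(\sigma(M_n))$, and pass to the limit using continuity of $\sigma$ and outer semicontinuity of $\partial f$.

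The main obstacle in either route is the bookkeeping created by repeated singular values, zero singular values, and the rectangular tail when $d_1 \neq d_2$: one must check that the block pattern of the diagonalizing orthogonal matrix is available \emph{simultaneously} for $\Phi(M)$ (respectively $\diag(\sigma(M))$) and for the candidate subgradient, and that the correspondence $\partial g \leftrightarrow \partial f$ (respectively the block-diagonal reduction at $\diag(\sigma(M))$) is an equality rather than an inclusion. It is exactly here that the sign-invariance of $f$ and a careful treatment of the $\pm\sigma$ pairs and the zero eigenspace are indispensable; in the dilation route one additionally owes a justification that the chain rule through the non-surjective map $\Phi$ is tight, which is itself cleanest to obtain from the equivariance of $\partial f_\sigma$.
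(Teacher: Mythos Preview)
The paper does not prove this statement at all: Theorem~\ref{thm:spectral_sub} is quoted verbatim from \cite{lewis2005nonsmooth} and used as a black box, so there is no ``paper's own proof'' to compare against. Your proposal is therefore not a reconstruction of anything in this paper but an outline of the original Lewis--Sendov argument.

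That said, your two routes are exactly the two standard ones, and the first (Jordan--Wielandt dilation plus the eigenvalue version of the theorem) is essentially what \cite{lewis2005nonsmooth} does. The outline is broadly correct, including your flagging of the genuine subtleties: the bookkeeping for repeated and zero singular values, the rectangular tail when $d_1\neq d_2$, and the tightness of the chain rule through the non-surjective embedding $\Phi$. One small caveat: in the direct route, probing along $e^{tA}\diag(\sigma(M))e^{-tB}$ only constrains the off-block-diagonal structure of a Fr\'echet subgradient; pinning down the \emph{within-block} structure (i.e., that on each block of equal singular values the subgradient is itself of the form $U_j\diag(\zeta_j)V_j^\top$ with the same $U_j,V_j$) requires an additional argument, typically via von Neumann's trace inequality and its equality case, which you mention but should be invoked at that step rather than only for the reverse inclusion. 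With that adjustment, either sketch can be completed into a full proof.
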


\begin{figure}[!t]
  \centering
    \begin{subfigure}[b]{0.45\textwidth}
      \centering\includegraphics[width=\textwidth]{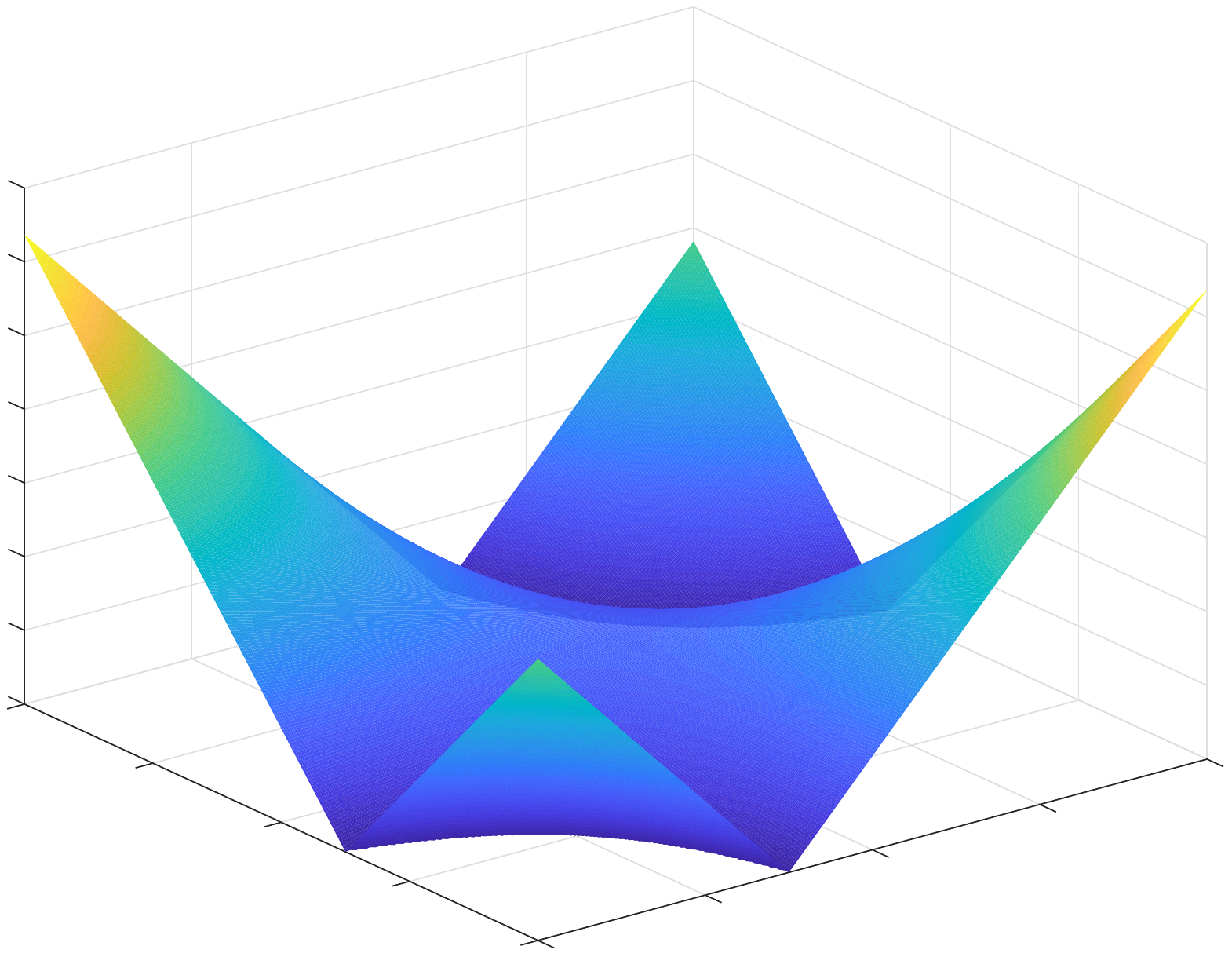}
            \label{fig:contour1}
    \end{subfigure}~
    \begin{subfigure}[b]{0.45\textwidth}
      \centering\includegraphics[width=\textwidth]{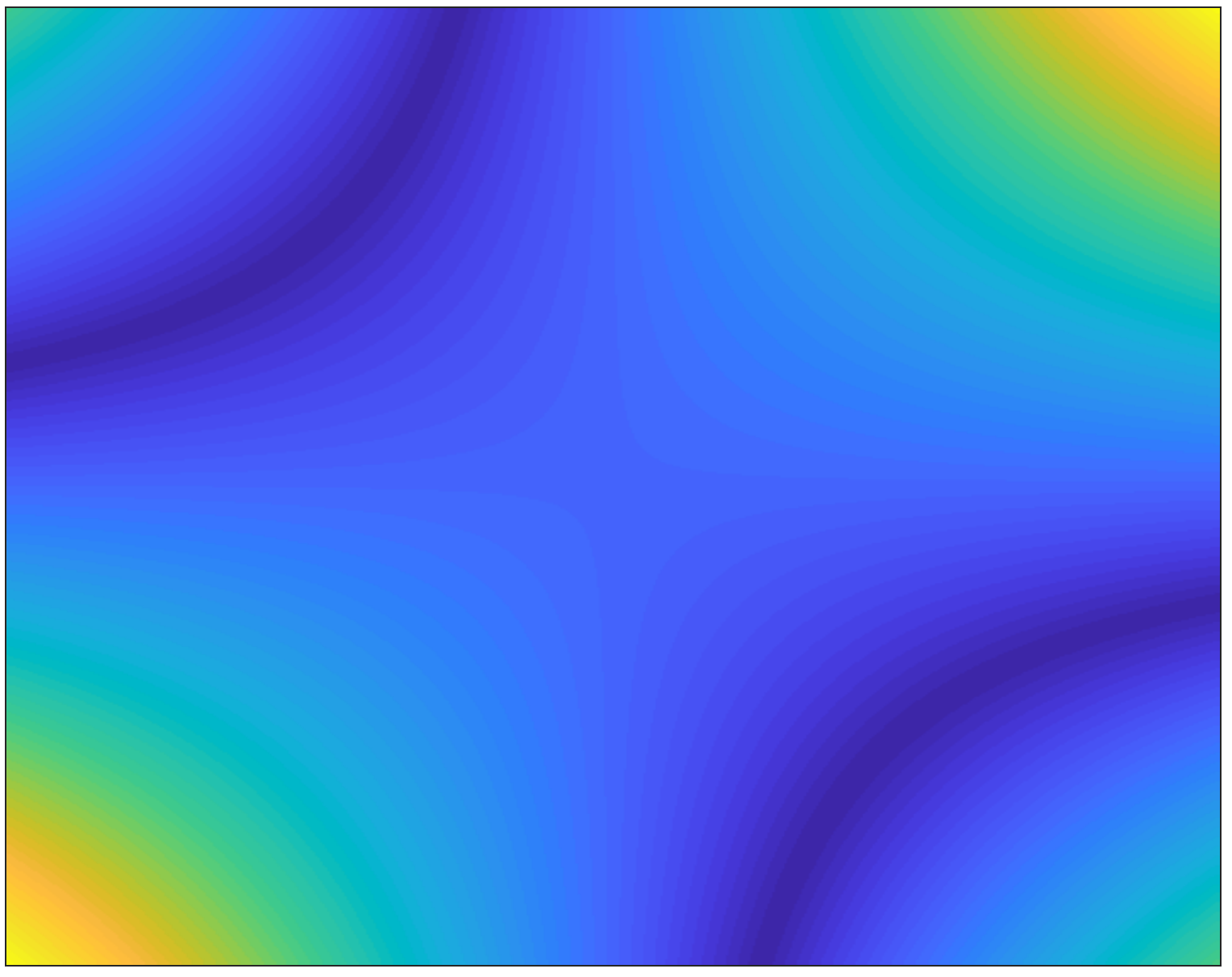}
      \label{fig:contour2}
    \end{subfigure}
\caption{Population objective $d_1 = d_2 = 1.$} \label{fig:4}
\end{figure}

\section{Population objective} \label{sec:population}

In this section we study the population objective $f_P$. A first important observation is that this function is a singular value function. Indeed, if we set $X = wx^\top- \bar w \bar x^\top$ then due to the orthogonal invariance of the Gaussian distribution we get 
\begin{equation} \label{eq:fP}
  f_P(w,x) = \EE |a^\top U\diag(\sigma(X))V^\top b| = \EE|\sigma_1(X) a_1 b_1 + \sigma_2(X) a_2 b_2|,
\end{equation} 
where of course $U\diag(\sigma(X))V^\top$ is the singular value decomposition of $X.$ This simple observation leads to our first result,  a closed form characterization of this function in terms $\sigma(X).$ We defer the proof to Appendix~\ref{subsec:proof_prop_closed_form}.

\begin{proposition}[Population objective]\label{prop:closed_form_population}
The population objective can be written as 
\begin{equation}
f_P(w,x) = \sigma_{\max}(X) \sum_{n=0}^\infty \left(\frac{(2n)!}{2^{2n}(n!)^2}\right)^2\frac{
\left(1- \kappa^{-2}(X) \right)^n}{1-2n}
\end{equation}
where $\kappa(X) = {\sigma_{\max}(X)}/{\sigma_{\min}(X)}$ is the condition number of $X.$
\end{proposition}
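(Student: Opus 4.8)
The plan is to start from the one-variable reduction already recorded in~\eqref{eq:fP}, namely $f_P(w,x) = \EE\,|\sigma_1(X)\,a_1 b_1 + \sigma_2(X)\,a_2 b_2|$ with $a_1,a_2,b_1,b_2$ i.i.d.\ standard Gaussians, and to evaluate this expectation in closed form. Write $\sigma_1 = \sigma_{\max}(X)$ and $\sigma_2 = \sigma_{\min}(X)$. First I would condition on $a = (a_1,a_2)$: given $a$, the quantity $\sigma_1 a_1 b_1 + \sigma_2 a_2 b_2$ is a centered Gaussian with variance $\sigma_1^2 a_1^2 + \sigma_2^2 a_2^2$, and since $\EE|\mathrm{N}(0,v^2)| = \sqrt{2/\pi}\,|v|$, the tower rule gives
\[
f_P(w,x) = \sqrt{\tfrac{2}{\pi}}\;\EE\sqrt{\sigma_1^2 a_1^2 + \sigma_2^2 a_2^2},
\]
all interchanges being justified by nonnegativity (Tonelli).

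Next I would factor out $\sigma_1$ and set $\rho := \sigma_2/\sigma_1 = \kappa^{-1}(X) \in [0,1]$, reducing the problem to computing $g(\rho) := \EE\sqrt{a_1^2 + \rho^2 a_2^2}$. Passing to polar coordinates $a_1 = r\cos\theta$, $a_2 = r\sin\theta$ separates the integral: the radial part is $\int_0^\infty r^2 e^{-r^2/2}\,dr = \sqrt{\pi/2}$, and the angular part is $\int_0^{2\pi}\sqrt{\cos^2\theta + \rho^2\sin^2\theta}\,d\theta = 4\int_0^{\pi/2}\sqrt{1-(1-\rho^2)\sin^2\theta}\,d\theta$, using the $\pi$-periodicity and the reflection symmetry of the integrand. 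The inner integral is, by definition, the complete elliptic integral of the second kind $E(k)$ with modulus $k^2 = 1-\rho^2 = 1-\kappa^{-2}(X)$. Collecting the constants yields $g(\rho) = \sqrt{2/\pi}\,E(k)$, hence $f_P(w,x) = \tfrac{2}{\pi}\,\sigma_{\max}(X)\,E(k)$.

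Finally I would substitute the classical Maclaurin expansion $E(k) = \tfrac{\pi}{2}\sum_{n\ge 0}\big(\tfrac{(2n)!}{2^{2n}(n!)^2}\big)^2\tfrac{k^{2n}}{1-2n}$, valid for $|k|\le 1$ (the coefficient behaves like $\tfrac{1}{\sqrt{\pi n}}$, so the series converges even at $k=1$); its leading factor $\tfrac{\pi}{2}$ cancels the $\tfrac{2}{\pi}$, and $k^{2n}$ becomes $(1-\kappa^{-2}(X))^n$, which is exactly the claimed formula. The remaining items to tidy up are the degenerate configurations: when $\sigma_{\min}(X)=0$ (i.e.\ $wx^\top-\bar w\bar x^\top$ has rank at most one) one has $\kappa=\infty$, $k=1$, $E(1)=1$, and the identity still holds with $f_P = \tfrac{2}{\pi}\sigma_{\max}(X)$; when $X=0$ both sides vanish. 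I expect the only mildly delicate points to be the bookkeeping of the multiplicative constants and pinning down the correct normalization of the series for $E(k)$, together with confirming convergence of that series at the boundary $k=1$ so the rank-deficient case is covered; the probabilistic reduction itself is routine.
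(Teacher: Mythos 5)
Your proposal is correct and follows essentially the same route as the paper's own proof: condition on $a$ to reduce to $\sqrt{2/\pi}\,\EE\sqrt{\sigma_1^2a_1^2+\sigma_2^2a_2^2}$, pass to polar coordinates to identify $\tfrac{2}{\pi}\sigma_{\max}(X)E(k)$ with $k^2 = 1-\kappa^{-2}(X)$, and expand $E$ in its Maclaurin series. Your additional remarks on convergence at $k=1$ and the rank-deficient case are correct and slightly more careful than the paper's argument.
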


When the signal $(\bar w, \bar x)$ lives in  $\RR^2$ the landscape of the population objective is rather simple, the only critical points are the solutions and zero, see Figure~\ref{fig:4}. This is not the case in higher dimensions where an entire subspace of critical points appear. In the reminder of this section, we develop tools to describe the critical points of a broad class of functions and we then specialize these results to the blind deconvolution population objective \eqref{eq:fP}.

\subsection{Landscape analysis for a class of singular value functions}
From now on we consider an arbitrary function $g: \RR^{d_1} \times \RR^{d_2} \rightarrow \RR$ for which there exists a rank one matrix $\bar w \bar x^\top$ and a singular value function $f_\sigma$ satisfying 
\[g(w,x) = f_\sigma (wx^\top - \bar w \bar x^\top) = f \circ \sigma \left(wx^\top - \bar w \bar x^\top\right).\]
This gives us two useful characterizations of $g$ that we will use throughout. In the following section we will see a way of recasting $f_P$ in this form. 

A simple application of the chain rule yields 
\begin{equation}
\partial g(w,x) = \left\{ \begin{bmatrix}
  Y x \\ 
  Y^\top w
\end{bmatrix} \, \Big| \, Y \in \partial f_{\sigma} (X)\right\}.
\end{equation}
Notice that we already have a description of $\partial f_\sigma (X)$ given by Theorem~\ref{thm:spectral_sub}, that is $Y \in \partial f_\sigma (X)$ if and only if there exists matrices $U \in O(d_1)$ and $V \in O(d_2)$ satisfying 
\begin{equation} \label{eq:descY} \sigma(Y) \in \partial f( \sigma(X)), \qquad Y = U \diag(\sigma(Y)) V^\top, \qquad \text{and} \qquad X = U \diag( \sigma(X)) V^\top.\end{equation} 
Equipped with these tools we derive the following result regarding the critical points of $g.$ We defer a proof to Appendix~\ref{proof:generalPopulation}.

\begin{theorem} \label{cor:general-landscape}
Suppose that $(w,x)$ is a stationary point for $g,$ i.e. $Yx = 0, Y^\top w = 0.$ Then at least one of the following conditions hold: 
\begin{enumerate}
  \item \textbf{Small objective.} $g(w,x) \leq g(\bar w, \bar x),$
  \item \textbf{Zero.} $(w,x) = 0,$
  \item \textbf{One zero component.} $\dotp{w,\bar w} = \dotp{x, \bar x} = 0$, $wx^\top = 0,$ and (assuming that $x$ is not zero) $Yx = 0$ (similarly for $w$). 
  \item \textbf{Small product norm.} $\dotp{w,\bar w} = \dotp{x, \bar x} = 0$, $\rank(Y) = 1$, and $0 < \|wx^\top \| < \|\bar w \bar x^\top\|.$
\end{enumerate}
Moreover, if $(\bar w,\bar x)$ minimizes $g$, then $(w,x)$ is a critical point if, and only if, it satisfies 1, 2, 3, or 4 for some $Y \in \partial f_\sigma(X).$  
\end{theorem}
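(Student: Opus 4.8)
The plan is to push everything through Lewis's simultaneous‑SVD description of $\partial f_\sigma$ (Theorem~\ref{thm:spectral_sub}) together with the chain‑rule identity $\partial g(w,x)=\{(Yx,\,Y^\top w):Y\in\partial f_\sigma(X)\}$, $X:=wx^\top-\bar w\bar x^\top$, exploiting the single structural fact $\rank X\le 2$. Fix a stationary point $(w,x)$ and a witness $Y\in\partial f_\sigma(X)$ with $Yx=0$ and $Y^\top w=0$. Write a simultaneous ordered SVD $X=U\diag(\sigma(X))V^\top$, $Y=U\diag(\zeta)V^\top$ with $\zeta\in\partial f(\sigma(X))$, and let $u_i,v_i$ be the columns of $U,V$. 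The stationarity equations then read $\zeta_i\,(v_i^\top x)=0$ and $\zeta_i\,(u_i^\top w)=0$ for all $i$, i.e. $x\perp\im Y^\top=\spann\{v_i:\zeta_i\ne0\}$ and $w\perp\im Y=\spann\{u_i:\zeta_i\ne0\}$. The complementary input is purely linear‑algebraic: $\im X=\spann\{u_i:\sigma_i(X)\ne0\}\subseteq\spann\{w,\bar w\}$ and $\im X^\top=\spann\{v_i:\sigma_i(X)\ne0\}\subseteq\spann\{x,\bar x\}$, with equality once the dimensions match.

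For the forward direction I would first dispose of $\rank X=0$ ($wx^\top=\bar w\bar x^\top$, so $g(w,x)=f(\sigma(0))=g(\bar w,\bar x)$, case 1), and then split by how many of the \emph{active} indices $i$ (those with $\sigma_i(X)\ne0$) also carry $\zeta_i\ne0$. If every active index has $\zeta_i\ne0$, then $\spann\{w,\bar w\}=\im X\subseteq\im Y$, so $w\in\im Y$ while $w\perp\im Y$, forcing $w=0$ and symmetrically $x=0$ (case 2; vacuous when $\rank X=2$). If some active index has $\zeta_i=0$ — in the rank‑two case, say $\zeta_1\ne0=\zeta_2$ — then $w\perp u_1$ together with $w\in\spann\{u_1,u_2\}$ gives $w\parallel u_2$ and likewise $x\parallel v_2$; substituting $wx^\top\in\spann\{u_2v_2^\top\}$ into $X=wx^\top-\bar w\bar x^\top$ and using that $\bar w\bar x^\top$ is \emph{exactly} rank one forces $\bar w\parallel u_1$, $\bar x\parallel v_1$, whence $\dotp{w,\bar w}=\dotp{x,\bar x}=0$ and $\|wx^\top\|=\sigma_{\min}(X)\le\sigma_{\max}(X)=\|\bar w\bar x^\top\|$: this is case 4, or case 3 when $wx^\top=0$, the boundary $\sigma_{\min}(X)=\sigma_{\max}(X)$ being absorbed by re‑selecting the SVD inside the repeated‑value block. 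The rank‑one case is analogous and simpler, yielding $w=0$ or $x=0$ directly. The delicate residual configuration is when $\zeta$ has entries on \emph{zero} singular values of $X$ (permitted by the sign‑invariance of $f$); there I would abandon the SVD bookkeeping and instead evaluate the subgradient inequality for $f_\sigma$ at $X$ against a well‑chosen competitor (such as $\bar w\bar x^\top$ or $0$), so that the vanishing of $Y$ on those directions kills the inner‑product term and produces $g(\bar w,\bar x)\ge g(w,x)$, i.e. case 1.

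For the converse, suppose $(\bar w,\bar x)$ minimizes $g$ and $(w,x)$ obeys one of the four conditions with its attached $Y$. Cases 2 and 3 are immediate: once $w=0$ (or $x=0$) the equation $Y^\top w=0$ (resp. $Yx=0$) is automatic and the other equation is part of the hypothesis, so $(Yx,Y^\top w)=0$. Case 1 is where minimality enters: $0\in\partial g(\bar w,\bar x)$ supplies, via the chain rule and Theorem~\ref{thm:spectral_sub}, a matrix supported on the singular directions of $\bar w\bar x^\top$ orthogonal to $(\bar w,\bar x)$, and combined with the coincidence of the relevant singular values of $X$ and $\bar w\bar x^\top$ implied by $g(w,x)\le g(\bar w,\bar x)$, this lets me assemble a $Y\in\partial f_\sigma(X)$ with $Yx=0$, $Y^\top w=0$. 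For case 4 the explicit SVD of $X=wx^\top-\bar w\bar x^\top$ when $w\perp\bar w$ and $x\perp\bar x$ — singular pairs essentially $(\hat w,\hat x)$ and $(\hat{\bar w},\hat{\bar x})$ with values $\|wx^\top\|<\|\bar w\bar x^\top\|$ — shows the rank‑one $Y=\zeta_1u_1v_1^\top$ furnished by the condition sits on the $\bar w\bar x^\top$‑direction, hence annihilates $x$ from the right and $w$ from the left.

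The main obstacle I expect is the bookkeeping in the forward case analysis: one must enumerate, for each possible rank of $X$, exactly which singular directions can support a nonzero entry of $\zeta$ — including spurious entries on the null singular values — and verify that every resulting configuration is consistent with $\bar w\bar x^\top$ being exactly rank one and therefore falls into one of the four buckets. The repeated‑singular‑value subcase (non‑uniqueness of the SVD) and, in the converse, the transfer of the minimality certificate from $\bar w\bar x^\top$ to $X$ are the two other points that need care.
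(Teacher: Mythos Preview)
Your strategy --- simultaneous SVD plus the stationarity identities $\zeta_i(u_i^\top w)=0=\zeta_i(v_i^\top x)$ --- is exactly the paper's, but your case organization leaves a real hole in the rank-one branch. When $\zeta_1\ne0$ you invoke ``$\spann\{w,\bar w\}=\im X$'' to place $w$ inside $\im Y$; but if $w$ and $\bar w$ are linearly independent that span is two-dimensional while $\im X$ is a line, so the inclusion $w\in\im Y$ fails and you cannot conclude $w=0$. The paper closes this with a dedicated linear-algebra lemma (Lemma~\ref{lemma:rank1char}): $wx^\top-\bar w\bar x^\top$ has rank one only if $w=\lambda\bar w$ or $x=\lambda\bar x$; combined with $u_1^\top w=0$ this forces $\lambda=0$, hence $wx^\top=0$ and case~3. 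Your ``delicate residual configuration'' (nonzero $\zeta_i$ on zero singular values of $X$) is a red herring --- those indices contribute nothing to $\langle Y,X\rangle$ and only add harmless orthogonality constraints --- but your proposed remedy via the subgradient inequality does \emph{not} yield $g(\bar w,\bar x)\ge g(w,x)$ unless every \emph{active} $\zeta_i$ vanishes. The paper's cleaner organizing move is to assume at the outset that cases~1 and~2 fail; convexity then gives $0<g(w,x)-g(\bar w,\bar x)\le\langle Y,X\rangle=\sum_i\zeta_i\sigma_i(X)$, forcing $\sigma_1(Y)>0$ in one stroke and collapsing your combinatorics to the single dichotomy $\sigma_2(X)=0$ versus $\sigma_2(X)>0$.

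For the converse, your treatment of case~1 is both over-engineered and wrong as stated: $g(w,x)\le g(\bar w,\bar x)$ does not imply any ``coincidence of the relevant singular values'' between $X$ and $\bar w\bar x^\top$. The correct argument is one line --- since $(\bar w,\bar x)$ minimizes $g$, the inequality $g(w,x)\le g(\bar w,\bar x)=\min g$ makes $(w,x)$ a minimizer too, hence stationary. Your handling of cases~2--4 in the converse is fine and matches the paper.
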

\subsection{Landscape of the population objective}
Our goal now is to apply Theorem~\ref{cor:general-landscape} to describe the landscape of $f_P.$ In order to do it we need to write $f_P(w,x)= f \circ \sigma (X) $ with $f: \RR^d \rightarrow \RR$ a symmetric sign-invariant convex function. An easy way to do this is to define 
\[f(s_1, \dots, s_d) = \EE \left(\left|\sum_{i=1}^d a_i b_i s_i \right|\right).\]  

To use Theorem~\ref{cor:general-landscape}, we need to study $\partial f.$ The next lemma shows that the function is actually differentiable at every point but zero. We defer the proof of this result to Appendix~\ref{subsec:proof_lemma_partial_derivatives}.

\begin{lemma}\label{lemma:partial_derivatives}
For any nonzero vector $s \in \RR^d_+ \setminus \{0\},$ the partial derivatives of $f$ satisfy 
 \begin{equation} \label{eq:gradient}
  \frac{\partial f}{\partial s_j}(s) = \sqrt{\frac{2}{\pi}}s_j \; \EE \left[{a_j^2}{\left(\sum_i^d (a_is_i)^2\right)^{-\frac{1}{2}}} \right].
 \end{equation}
\end{lemma}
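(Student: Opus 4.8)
\emph{Proof plan.} The strategy is to integrate out $b$ first and then differentiate under the expectation over $a$.

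\emph{Step 1: conditioning on $a$.} Let $a=(a_1,\dots,a_d)$ and $b=(b_1,\dots,b_d)$ be the two independent standard Gaussian vectors defining $f$, and write $a\odot s$ for their entrywise product. Conditionally on $a$, the scalar $\sum_{i=1}^d a_ib_is_i=\sum_{i=1}^d (a_is_i)b_i$ is a centered Gaussian with variance $\sum_{i=1}^d(a_is_i)^2=\|a\odot s\|^2$. Combining the elementary identity $\EE|Z|=\sqrt{2/\pi}\,\tau$ for $Z\sim\normal(0,\tau^2)$ with Tonelli's theorem, we obtain
\[
f(s)=\EE_a\!\left[\EE_b\Big|\textstyle\sum_{i=1}^d a_ib_is_i\Big|\right]=\sqrt{\tfrac{2}{\pi}}\;\EE_a\,\|a\odot s\| .
\]
It therefore suffices to show that $g(s):=\EE_a\big(\sum_{i=1}^d a_i^2 s_i^2\big)^{1/2}$ is differentiable at $s$ with $\partial g/\partial s_j(s)=s_j\,\EE_a\big[a_j^2\big(\sum_i(a_is_i)^2\big)^{-1/2}\big]$, since then $\partial f/\partial s_j=\sqrt{2/\pi}\,\partial g/\partial s_j$ is exactly \eqref{eq:gradient}.

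\emph{Step 2: differentiation under the integral sign.} Regard $f$ and $g$ as functions on all of $\RR^d$ (they are even in each coordinate). Fix $s\neq 0$, pick an index $k$ with $s_k\neq0$, and choose a ball $N$ around $s$ on which $|t_k|$ stays bounded away from zero. For every $t\in N$ and every $a$ with $a_k\neq0$ -- that is, for almost every $a$, with an exceptional null set $\{a_k=0\}$ that does not depend on $t$ -- the map $t\mapsto\big(\sum_i a_i^2 t_i^2\big)^{1/2}$ is differentiable, since its radicand is at least $a_k^2t_k^2>0$, and
\[
\frac{\partial}{\partial t_j}\Big(\sum_{i=1}^d a_i^2 t_i^2\Big)^{1/2}=\frac{a_j^2 t_j}{\big(\sum_i a_i^2 t_i^2\big)^{1/2}} .
\]
This partial derivative is dominated on $N$ by the integrable function $|a_j|$: when $t_j\neq0$ we have $|a_j^2 t_j|\big(\sum_i a_i^2 t_i^2\big)^{-1/2}\le a_j^2|t_j|\,(a_j^2t_j^2)^{-1/2}=|a_j|$, the left-hand side is $0$ when $t_j=0$, and $\EE|a_j|=\sqrt{2/\pi}<\infty$. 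Hence the classical rule for differentiating under the expectation applies and gives $\partial g/\partial s_j(s)=\EE_a\big[a_j^2 s_j\big(\sum_i a_i^2 s_i^2\big)^{-1/2}\big]$, as required.

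\emph{Main obstacle.} The one delicate point is the interchange of differentiation and expectation in Step 2: the integrand $\big(\sum_i a_i^2 t_i^2\big)^{1/2}$ is merely Lipschitz, and its gradient blows up as $\|a\odot t\|\to0$, so a crude dominating bound does not exist. The inequality $|a_j^2 t_j|\,\|a\odot t\|^{-1}\le|a_j|$ above is precisely what cancels this singularity, the factor $|t_j|$ absorbing the blow-up. Everything else is bookkeeping: the exceptional set $\{a_k=0\}$ can be taken uniform over a small enough ball $N$, and in the degenerate case $s_j=0$ with $s$ supported on a single coordinate both sides of \eqref{eq:gradient} vanish under the convention $0\cdot(+\infty)=0$.
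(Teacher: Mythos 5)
Your proof is correct, and it follows the same two-step skeleton as the paper's: integrate out $b$ conditionally on $a$ to reduce $f$ to $\sqrt{2/\pi}\,\EE_a\|a\odot s\|$, then differentiate under the expectation via a domination argument. The difference lies in the dominating function, and yours is genuinely better. The paper bounds the integrand by
$\frac{\sup_{u\in U}\|u\|_\infty}{\inf_{u\in U}\min_{i\in\supp(s)}|u_i|}\,a_j^2\bigl(\sum_{i\in\supp(s)}a_i^2\bigr)^{-1/2}$
and then upper-bounds this by a multiple of $\bigl(\sum_{i\in\supp(s)}a_i^2\bigr)^{1/2}$; that last step uses $a_j^2\le\sum_{i\in\supp(s)}a_i^2$, which is only valid when $j\in\supp(s)$, and indeed when $\supp(s)=\{k\}$ with $j\ne k$ the intermediate bound $a_j^2/|a_k|$ is not Gaussian-integrable. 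Your observation that $|a_j^2t_j|\,\|a\odot t\|^{-1}\le|a_j|$ pointwise gives a dominating function that is uniform over the neighborhood, independent of $\supp(s)$, and always integrable, and it lets you handle cleanly the degenerate situation where the expectation on the right of \eqref{eq:gradient} diverges while $s_j=0$ (which the paper does not address). The only thing to make fully explicit is the differentiability of $t_j\mapsto(\cdots+a_j^2t_j^2)^{1/2}$ at $t_j=0$ with derivative $0$ when the remaining radicand is positive, but that is immediate since the map is smooth in $t_j^2$ there.
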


This lemma gives us the final tool to derive the main theorem regarding the landscape of $f_P.$
\begin{theorem}\label{theo:mainPopulation}
The set of critical points of the population objective $g_P$ is exactly 
\[\cS \cup \{0\} \cup \{(w,x) \mid \dotp{w, \bar w} = 0, \dotp{x, \bar x} = 0, \text{and } wx^\top =0\}.\]
\end{theorem}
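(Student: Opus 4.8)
The plan is to derive \Cref{theo:mainPopulation} by specializing \Cref{cor:general-landscape} to $g = f_P$. Two preliminary observations make this possible. First, $f_P = f \circ \sigma$ for the symmetric, sign-invariant, convex function $f(s) = \EE\bigl|\sum_i a_i b_i s_i\bigr|$ introduced above; since \Cref{lemma:partial_derivatives} shows $f$ is differentiable at every $s \neq 0$, convexity gives $\partial f(s) = \{\nabla f(s)\}$ for all $s \neq 0$. Second, every point of $\cS$ is a global minimizer of $f_P$, since its value there is $\EE|0| = 0$ while $f_P \geq 0$; in particular $(\bar w, \bar x)$ minimizes $f_P$, so the ``moreover'' clause of \Cref{cor:general-landscape} applies: writing $X = wx^\top - \bar w\bar x^\top$, a point $(w,x)$ is critical if and only if one of the four alternatives there holds for some $Y \in \partial f_\sigma(X)$. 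It then remains only to solve for each alternative.

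For Alternative~1 ($g(w,x) \le g(\bar w,\bar x) = 0$), nonnegativity of $f_P$ forces $f_P(w,x) = 0$. A short computation shows $f(s) = 0$ only at $s = 0$: for $s \neq 0$, conditioning on $b$ turns $\sum_i a_i b_i s_i$ into a centered Gaussian whose variance $\sum_i b_i^2 s_i^2$ is a.s.\ positive, hence of strictly positive expected modulus. Therefore $\sigma(X) = 0$, i.e.\ $wx^\top = \bar w\bar x^\top$, and uniqueness of the rank-one factorization (using $\bar w, \bar x \neq 0$) puts $(w,x) \in \cS$. Alternative~2 is the single point $0$. Together these account for the $\cS \cup \{0\}$ part of the statement.

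The structural part of Alternative~3 is verbatim the set $\{(w,x) \mid \dotp{w,\bar w} = \dotp{x,\bar x} = 0,\ wx^\top = 0\}$, so two things remain: showing every point of this set is genuinely critical, and showing Alternative~4 is vacuous. For the first, $wx^\top = 0$ gives $X = -\bar w\bar x^\top$, so $\sigma(X) = (\|\bar w\|\,\|\bar x\|, 0, \dots, 0) \neq 0$; \Cref{lemma:partial_derivatives} then evaluates $\nabla f(\sigma(X))$ to a strictly positive multiple of $e_1$, whence by \Cref{thm:spectral_sub} the corresponding $Y$ is a scalar multiple of $\bar w\bar x^\top$, so $Yx$ is proportional to $\dotp{x,\bar x}\,\bar w = 0$ and $Y^\top w$ to $\dotp{w,\bar w}\,\bar x = 0$; hence $0 \in \partial f_P(w,x)$. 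For the second, Alternative~4 has $\|wx^\top\| > 0$, so $w, x \neq 0$, and with $\dotp{w,\bar w} = \dotp{x,\bar x} = 0$ this makes $\{w,\bar w\}$ and $\{x,\bar x\}$ linearly independent; since $Xx = \|x\|^2 w$ and $X\bar x = -\|\bar x\|^2\bar w$ are then independent, $\rank(X) = 2$ and $\sigma(X) \neq 0$, so $\partial f(\sigma(X)) = \{\nabla f(\sigma(X))\}$. By \Cref{lemma:partial_derivatives} together with the sign-invariance of $f$, the partial derivative $\frac{\partial f}{\partial s_j}(s)$ vanishes exactly when $s_j = 0$, so $\nabla f(\sigma(X))$ has the same support as $\sigma(X)$; by \Cref{thm:spectral_sub} this gives $\rank(Y) = \rank(X) = 2 \neq 1$, contradicting Alternative~4. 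Assembling the two directions yields the claimed equality of critical-point sets.

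The work here is bookkeeping rather than a genuinely hard estimate. The step I expect to be most delicate is establishing that Alternative~4 cannot occur: this rests on the rank identity $\rank(Y) = \rank(X)$, which in turn relies on the strict positivity (and, for coordinates $j$ with $s_j \neq 0$, finiteness) of the expectation factor in \Cref{lemma:partial_derivatives} for every nonzero $s$, together with the rank-two computation for $X$. Some extra care is also needed with the degenerate subcases $w = 0$ or $x = 0$ inside the third set, and with the standing assumption that the signal $(\bar w, \bar x)$ is nonzero, though neither affects the argument.
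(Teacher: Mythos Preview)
Your proposal is correct and follows essentially the same route as the paper: both invoke the ``moreover'' clause of \Cref{cor:general-landscape}, identify Alternatives~1 and~2 with $\cS$ and $\{0\}$, verify Alternative~3 by computing that the unique $Y\in\partial f_\sigma(X)$ is a scalar multiple of $\bar w\bar x^\top$, and rule out Alternative~4 by using \Cref{lemma:partial_derivatives} to show that $\rank(X)=2$ forces $\sigma_2(Y)>0$. Your write-up is in fact a bit more careful in places (you explicitly argue $f(s)=0\Rightarrow s=0$, spell out why $\rank(X)=2$ under the orthogonality hypotheses, and note the finiteness/positivity of the expectation factor), but the argument is the same.
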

\begin{proof}
Notice that $(\bar w, \bar x)$ minimizes the population objective $f_P$, therefore Theorem~\ref{cor:general-landscape} gives a complete description of the critical points. Let us examine each one of the conditions in this theorem. 

The points in $\{(w,x) \mid wx^\top = \bar w \bar x^\top \}$ and $\{0\}$ are contained in the set of stationary points because they satisfy the first and second condition, respectively. 

Now, let $(w,x) \in \{\bar w\}^\perp \times \{\bar x\}^\perp$ such that $wx^\top = 0.$ Thus, the matrix $X$ is rank $1$, and consequently~\eqref{eq:gradient} reveals that that any $Y \in \partial f_\sigma(X)$ satisfies $\sigma(Y) = \nabla f(\sigma(X)) = (2/\pi, 0 , \dots, 0)$. Therefore, due to \eqref{eq:descY}, we get $Y = \frac{2}{\pi} \bar w \bar x^\top / \|\bar w\|\|\bar x\|.$ Without loss of generality, assume $x$ is not zero. Then, $\norm{Yx} = \frac{2}{\pi \|\bar x\|}|\dotp{x,\bar x} | = 0$ and, consequently, $(w,x)$ is stationary. 

On the other hand, let $(w,x) \in \{\bar w\}^\perp \times \{\bar x\}^\perp$ such that $0 <\|wx^\top \| < \|\bar w\bar x^\top \|$. Therefore, the matrix $X$ is rank $2$ and so~\eqref{eq:gradient} gives that $\sigma_2(Y) > 0$ for all $Y \in \partial f_\sigma(X).$ Hence, $(w,x)$ is not a stationary point, giving the result.
\end{proof}
\section{Sample objective} \label{sec:sample}

In this section we describe the approximate locations of the critical points of the sample objective. Unlike in the  smooth case, nonsmooth losses do not exhibit point-wise concentration of the subgradients, or in other words, $\partial f_S(w,x)$ doesn't converges to $\partial f_P(w,x)$. To overcome this obstacle, we show that the graph of $\partial f_S$ approaches that of $\partial f_P$ at a quantifiable rate. This intuitively means that if $(w,x)$ is a critical point of $f_S,$ then nearby there exists a point $(\widehat w, \widehat x)$ with $\dist(0, \partial f_P(\widehat w, \widehat x))$ small.

The following result can be regarded as an analogous version of Theorem~\ref{theo:mainPopulation} for the sample objective. The reasoning behind this theorem is similar: we first develop theory for a broad class of functions, and then specialize it to $f_S.$ However, the proof of this result is more involved and will require us to study the location of epsilon critical points of the population. We defer the development of these arguments and the proof of the next result to Appendices~\ref{app:approximate} and~\ref{app:Sample}, respectively.

\begin{theorem}\label{theo:sample_objective}
Consider the sample objective \eqref{p:1} generated with two Gaussian matrices $A$ and $B$. For any fixed $\nu > 1$ there exist numerical constants $c_1, c_2, c_3> 0$ such that if $m \geq c_1(d_1+d_2+1)$, then with probability at least $1-c_2\exp(- c_3 (d_1+ d_2+1))$, every stationary point $(w,x)$ of $f_S$ for which $\|(w,x)\| \leq \nu \|(\bar w, \bar x)\|$ satisfies at least one of the following conditions 
\begin{enumerate}
  \item\textbf{Near zero.} \[\|(w,x)\| \leq   \|(\bar w, \bar x)\| \Delta,\]
  \item\textbf{Near a solution.} \[ \| wx^\top - \bar w \bar x^\top\| \lesssim (\nu^2+1)\|\bar w \bar x^\top\| \Delta,\]
\item\textbf{Near orthogonal.}
\[\left\{
    \begin{array}{rl}
  |\dotp{w,\bar w}| &\lesssim (\nu^2 + 1) \|(w,x)\| \|\bar w\| \Delta, \\
  |\dotp{x,\bar x}| &\lesssim (\nu^2 + 1)\|(w,x)\| \|\bar x\| \Delta. 
  \end{array}
  \right.
\]
\end{enumerate}
where $\Delta = \left( \frac{d_1+d_2 +1}{m} \log\left(\frac{m}{d_1+d_2+1}\right) \right)^{\frac{1}{8}}.$ 
\end{theorem}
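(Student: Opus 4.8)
The plan is to transfer the population-objective characterization of Theorem~\ref{theo:mainPopulation} to the sample objective $f_S$ via a quantitative graphical-convergence argument, exactly along the lines sketched after the main-contributions discussion. Concretely, I would first invoke the quantitative Attouch-Wets result from \cite{davis2017nonsmooth}: since $f_S$ is an empirical average of i.i.d.\ terms with the same structure as $f_P$, and since both are Lipschitz and weakly convex, one can show that with probability at least $1-c_2\exp(-c_3(d_1+d_2+1))$ the graph $\gph\partial f_S$ is contained in a $\delta$-neighborhood of $\gph\partial f_P$ on the ball $\{\|(w,x)\|\le\nu\|(\bar w,\bar x)\|\}$, where $\delta$ scales like the uniform deviation $\sup|f_S-f_P|$ to some power. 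Standard covering-number / empirical-process bounds for Lipschitz functions on a bounded set give $\sup|f_S-f_P|\lesssim\sqrt{(d_1+d_2+1)/m\cdot\log(m/(d_1+d_2+1))}$, and chasing the exponent through the Attouch-Wets estimate produces the stated $\Delta=\big((d_1+d_2+1)/m\cdot\log(m/(d_1+d_2+1))\big)^{1/8}$. Thus any stationary point $(w,x)$ of $f_S$ in the ball admits a nearby point $(\widehat w,\widehat x)$ with $\|(w,x)-(\widehat w,\widehat x)\|\lesssim\Delta$ and $\dist(0,\partial f_P(\widehat w,\widehat x))\lesssim\Delta$ (after appropriate rescaling by $\|(\bar w,\bar x)\|$).

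The second stage is a robust, quantitative version of Theorem~\ref{cor:general-landscape}: I would show that an \emph{approximate} critical point of $f_P$ (or more precisely of the general singular value composition $g=f_\sigma\circ(wx^\top-\bar w\bar x^\top)$) must lie near one of the three sets in the conclusion. Using the chain-rule description $\partial g(w,x)=\{(Yx,\,Y^\top w):Y\in\partial f_\sigma(X)\}$ together with the spectral subdifferential formula \eqref{eq:descY} and the explicit gradient \eqref{eq:gradient}, the condition $\dist(0,\partial g(\widehat w,\widehat x))\le\epsilon$ becomes: there exist $U,V$ simultaneously diagonalizing $X=\widehat w\widehat x^\top-\bar w\bar x^\top$ with $\sigma(Y)=\nabla f(\sigma(X))$ and $\|Y\widehat x\|,\|Y^\top\widehat w\|\lesssim\epsilon$. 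The key analytic inputs are: (i) $\nabla f(s)$ has all coordinates bounded below by a positive multiple of $s_j/\sigma_{\max}$ whenever $\sigma_{\min}$ is a constant fraction of $\sigma_{\max}$, so two comparable singular values of $X$ force $Y$ to have two comparable singular values, which in turn forces $(\widehat w,\widehat x)$ to be near zero or near a solution by a dimension/rank count; and (ii) when $X$ is (nearly) rank one, $Y\approx(2/\pi)\bar w\bar x^\top/(\|\bar w\|\|\bar x\|)$, so the near-stationarity conditions $\|Y\widehat x\|\lesssim\epsilon$, $\|Y^\top\widehat w\|\lesssim\epsilon$ read off as $|\dotp{\widehat x,\bar x}|\lesssim\epsilon\|\bar w\|^{-1}$ and $|\dotp{\widehat w,\bar w}|\lesssim\epsilon\|\bar x\|^{-1}$, which is the ``near orthogonal'' case. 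Tracking the case $\|wx^\top\|\approx\|\bar w\bar x^\top\|$ versus $\|wx^\top\|$ small against $\sigma_2(X)$ gives the ``near a solution'' versus ``near zero'' dichotomy, with all comparisons degrading by the factor $(\nu^2+1)$ coming from the bound $\|(w,x)\|\le\nu\|(\bar w,\bar x)\|$ (since $\|wx^\top\|$ and the lengths of $w,x$ are controlled by $\nu$).

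Finally I would combine the two stages: transfer from $(w,x)$ to $(\widehat w,\widehat x)$ at cost $\Delta$, apply the robust Theorem~\ref{cor:general-landscape} at $(\widehat w,\widehat x)$ with $\epsilon\asymp\Delta$, and then transfer the conclusions back to $(w,x)$, absorbing the extra $O(\Delta)$ and the Lipschitz constants into the $\lesssim$ and into redefining the numerical constants; the $\nu$-dependence is carried along explicitly. The main obstacle is the second stage --- making Theorem~\ref{cor:general-landscape} quantitatively stable. The original proof is a clean case analysis that uses \emph{exact} equalities ($Yx=0$, $\sigma_2(Y)>0$ strictly, $\dotp{w,\bar w}=0$ exactly); turning each of these into a stability estimate requires uniform lower bounds on $\nabla f$ and on the singular-value gaps that do not blow up over the relevant region, and careful bookkeeping of how an $\epsilon$-perturbation in $\partial g$ propagates through the simultaneous SVD of the rank-two matrix $X$ to the inner products $\dotp{w,\bar w},\dotp{x,\bar x}$ and the norm $\|wx^\top\|$. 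This, together with getting the clean $1/8$ exponent out of the composition of the empirical-process bound and the Attouch-Wets estimate, is where essentially all the work lies; the rest is routine.
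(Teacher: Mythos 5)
Your proposal follows essentially the same route as the paper: a relative uniform concentration bound of order $\big(\tfrac{d_1+d_2+1}{m}\log(\tfrac{m}{d_1+d_2+1})\big)^{1/2}$ (Proposition~\ref{prop:concentration}), the quantitative Attouch--Wets transfer from \cite{davis2017nonsmooth} producing a nearby approximate critical point of $f_P$ (Proposition~\ref{prop:close_graphs}), and a quantitative $\varepsilon$-critical version of Theorem~\ref{cor:general-landscape} (Theorem~\ref{theo:epsilon_critical}), with the exponents composing to the stated $1/8$ exactly as you describe. You also correctly locate where the real work lies --- making the population landscape analysis robust --- which the paper carries out via the sharpness condition \eqref{eq:grow_cond}, lower bounds on $\sigma_1(Y)+\sigma_2(Y)$, and a Davis--Kahan perturbation argument, matching your sketch.
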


We remark that one can further prove that with high probability, there exists a neighborhood around the solutions set $\cS$ in which the only critical points are the solutions \cite{charisopoulos2019composite}. Hence at the cost of potentially increasing $c_1$, the second condition can be strengthened to $(\bar w,\bar x) \in \cS.$

\section{Experiments} \label{sec:experiments}

In this section we empirically investigate the behavior of a randomly-initialized subgradient algorithm applied to \eqref{p:1}.\footnote{The experiments were run in a 2013 MacBook Pro with 2.4 GHz Intel Core i5 Processor and 8 GB of RAM.} It is known that well-tuned subgradient algorithms converge to critical points for any locally Lipschitz function \cite{Davis2019}. Further, these types of iterative procedures are computationally cheap, easy to implement, and widely used in practice. This makes them a great proxy for our purposes. For the experiments, we use Polyak's subgradient method, a classical algorithm known to exhibit rapid convergence near solutions for \emph{sharp weakly-convex functions} \cite{davis2018subgradient}.\footnote{It was shown in \cite{charisopoulos2019composite} that $f_S$ satisfies these assumptions with high probability provided $m \gtrsim (d_1+d_2)$.}  Polyak's method is an iterative algorithm given by 
\begin{equation}\label{eq:Polyak}
x_{k+1} \leftarrow x_k - \left( \frac{f(x_k) - \inf f}{\|g_k\|^2}  \right) g_k \qquad \text{with }g_k \in \partial f(x_k).
\end{equation} 
Notice that the step size requires us to know the minimum value, which in this case is exactly zero. Polyak's algorithm was used in \cite{charisopoulos2019low} as one of the procedures in the two-stage method for blind deconvolution.

In all the experiments the goal is to recover a pair of canonical vectors $(e_1, e_1) \in \RR^{d_1} \times \RR^{d_2}.$ Observe that this instance is a good representative of the average performance of the method due to the rotational invariance of the measurements. We evaluate the frequency of successful recovery of \eqref{eq:Polyak} using two different random initialization strategies: 
\begin{enumerate}
  \item (\textbf{Uniform over a cube}) We set $(w_0, x_0)$ to be an uniform vector on the cube $[-\nu, \nu]^{d_1 + d_2}.$ 
  \item (\textbf{Random Gaussian}) We set $w_0$ and $x_0$ to be distributed $\text{N}(0, \frac{\nu^2}{d_1} I_{d_1})$ and $\text{N}(0, \frac{\nu^2}{d_2} I_{d_2})$, respectively. This ensures that with high probability, both $\|w_0\|$ and $\|x_0\|$ are close to $\nu.$
\end{enumerate}
We generate phase transition plots for both initialization strategies by varying the value of $\nu$ and $C \triangleq m/(d_1+d_2)$ between $\{2^4, 2^5, \dots, 2^{10}\}$ and $\{1,2, \dots, 8\},$ respectively. For each choice of parameters we generate ten random instances $(w_0, x_0, A,B)$ and record in how many instances Polyak's method achieves a relative error smaller than $10^{-6}.$ The method stops whenever it reaches $100\,000$
iterations or the function value is less than $10^{-10}.$ We repeat these experiments for two different pairs of dimensions, $(d_1, d_2) \in \{(100, 50), (200, 100)\}$. The results are displayed in Figures~\ref{fig:1} and \ref{fig:2}. 

A first immediate observation is that the random initialization, the dimension, and the scaling parameter $\nu$ do not seem to be affecting the recovery frequency of the algorithm. The only parameter that controls the recovery frequency is $C.$ This is intuitively consistent with Theorem~\ref{theo:sample_objective}, since this parameter determines the concentration of spurious critical points around a subspace. Nonetheless, the effect of this parameter seems to be stronger in practice. Indeed, the probability of recovery exhibits a sharp phase transition at $C \sim 3$.

\paragraph*{Reproducible research.} All the results and code
implemented for these experiments are publicly available in \url{https://github.com/mateodd25/BlindDeconvolutionLandscape}.

\begin{figure}[!t]
  \centering
    \begin{subfigure}[b]{0.35\textwidth}
      \centering\includegraphics[width=\textwidth]{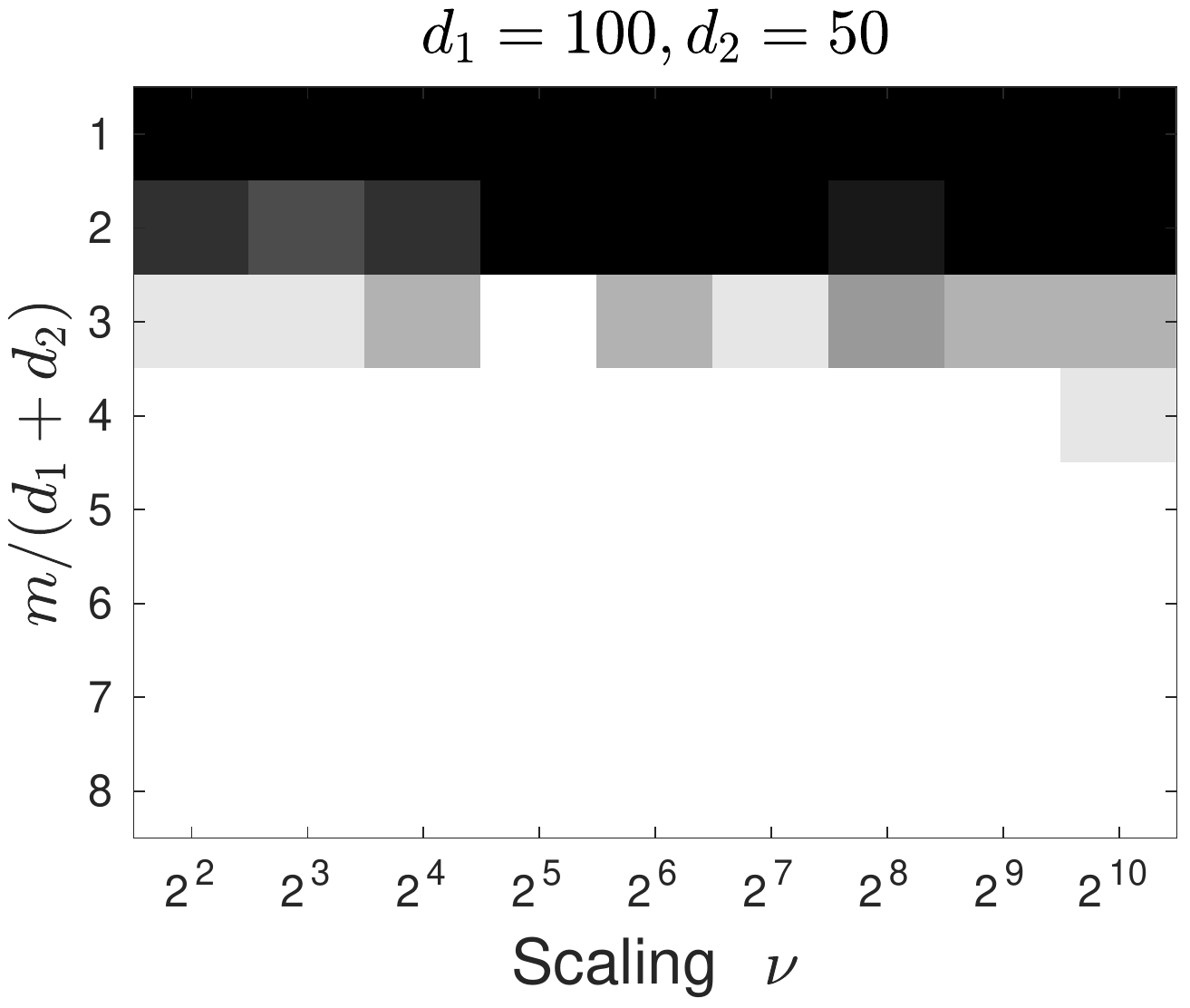}
            \label{fig:contour1}
    \end{subfigure}~
    \begin{subfigure}[b]{0.35\textwidth}
      \centering\includegraphics[width=\textwidth]{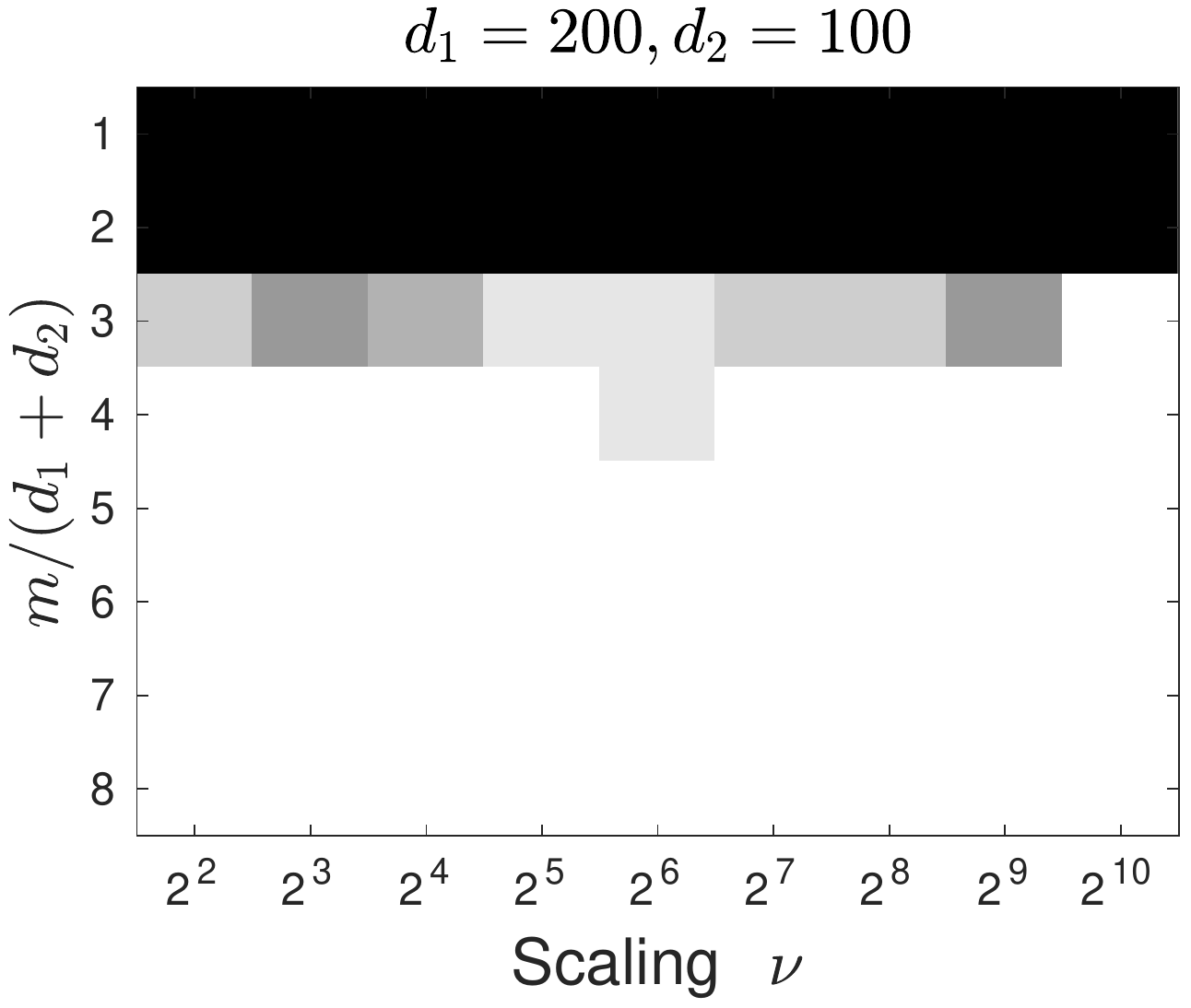}
      \label{fig:contour2}
    \end{subfigure}
    \begin{subfigure}[b]{0.35\textwidth}
      \centering\includegraphics[width=\textwidth]{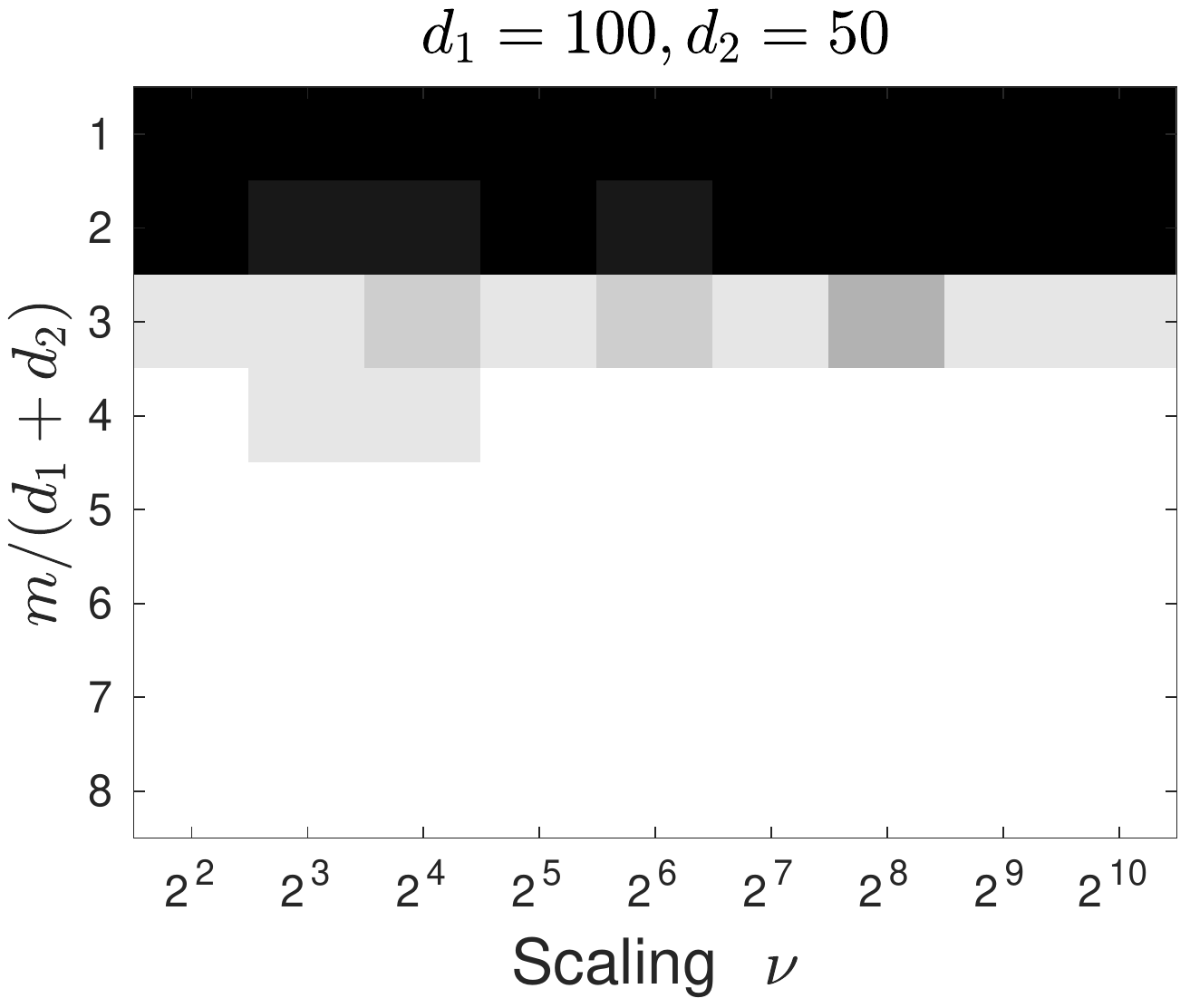}
            \label{fig:contour1}
    \end{subfigure}~
    \begin{subfigure}[b]{0.35\textwidth}
      \centering\includegraphics[width=\textwidth]{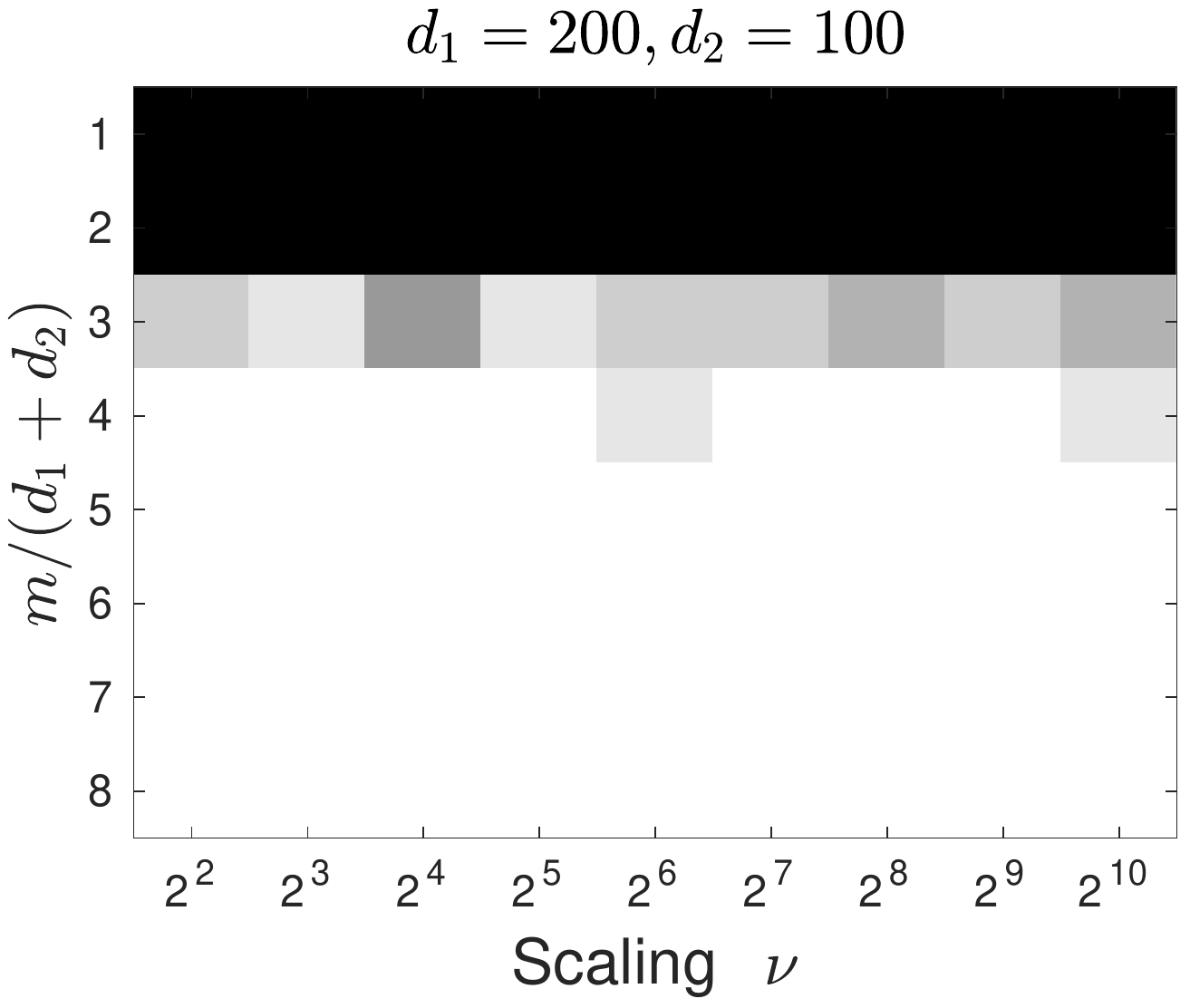}
      \label{fig:contour2}
    \end{subfigure}
\caption{Empirical probability of recovery with random initialization, first row with Gaussian distribution and second row with uniform over a cube. White denotes probability one and black denotes probability zero. Left and right images correspond to $(d_1,d_2) = (100, 50)$ and $(d_1,d_2) = (200, 100),$ respectively.} \label{fig:2}
\end{figure}

\section{Conclusions}
\label{sec:dis}
We investigated both the population and sample objectives of a
formulation for the blind deconvolution problem. We showed that in
both cases the set of spurious critical points are, or concentrate
near, a subspace of codimension two. Such concentration can be
measured in terms of the ratio of the dimension of the signal we wish
to recover over the number of measurements. This sheds light on the
fact that a randomly-initialized subgradient method converges to a
solution whenever this ratio is small enough. Our results, however, do
not entirely explain this behavior. It could be the case that we are
witnessing an instance of a more general phenomenon. It is known that
when the aforementioned ratio is small, the sample objective becomes
sharp weakly convex with high probability. It would be interesting to
know if for this type of function a well-tuned subgradient method
avoids spurious critical points. We leave this as an open question for
future research.

\section*{Acknowledgments}

I thank Jose Bastidas, Damek Davis, Dmitriy Drusvyatskiy, Robert Kleinberg, and Mauricio Velasco for insightful and encouraging conversations. Finally, I would like to thank my advisor Damek Davis for research funding during the completion of this work.

\bibliographystyle{unsrt}
\bibliography{bibliography2}
\appendix

\section{Proof of Proposition~\ref{prop:closed_form_population}}\label{subsec:proof_prop_closed_form}
Recall that we defined the functions $f: \RR^d_+ \rightarrow \RR$ and $f_\sigma: \RR^{d \times n} \rightarrow \RR$ to be such that $f_P(w,x) = f_\sigma(X) = f(\sigma(X)).$ 
It is known that for constants $c_1, c_2 \in \RR_+$ we have that $c_1 b_1 + c_2 b_2 \stackrel{(d)}{=} \sqrt{c_1^2 + c_2^2} b_1,$ where $\stackrel{(d)}{=}$ denotes equality in distribution. Then 
\begin{align*}
f(s_1, s_2, 0, \dots, 0) & = \EE (|s_1 a_1 b_1 + s_2 a_2b_2|)  \\ 
& = \EE  \left(\EE (|s_1 a_1 b_1 + s_2 a_2b_2| \mid a_1, a_2) \right)\\ 
& = \EE  \left(\EE (\sqrt{(s_1 a_1)^2  + (s_2 a_2)^2}|b_1| \mid a_1, a_2) \right) \\ 
& = \sqrt{\frac{2}{\pi}}\EE\sqrt{(s_1 a_1)^2  + (s_2 a_2)^2} \\
& = \frac{\sqrt{\pi}}{\sqrt{2}\pi^2} \int_{- \infty}^{\infty} \int_{- \infty}^{\infty} {\sqrt{(a_1s_1)^2 + (a_2s_2)^2}} \exp\left(-\frac{a_1^2 + a_2^2}{2}\right)da_1 da_2 \\
& = 4\frac{\sqrt{\pi}}{\sqrt{2}\pi^2} \int_{0}^{\infty} \int_{0}^{\infty} {\sqrt{(a_1s_1)^2 + (a_2s_2)^2}} \exp\left(-\frac{a_1^2 + a_2^2}{2}\right)da_1 da_2 \\
& = 2\frac{\sqrt{2\pi}}{\pi^2} \int_{0}^{\infty} \int_{0}^{\pi/2} r^2{\sqrt{s_1^2\cos^2\theta + s_2^2\sin^2\theta}} \exp\left(-\frac{r^2}{2}\right)d\theta dr \\
& = \frac{2}{\pi} \int_{0}^{\pi/2}{\sqrt{s_1^2\cos^2\theta + s_2^2\sin^2\theta}} d\theta \\
& = \frac{2s_1}{\pi} \int_{0}^{\pi/2}{\sqrt{\cos^2\theta + \frac{s_2^2}{s_1^2}\sin^2\theta}} d\theta \\
& = \frac{2s_1}{\pi} \int_{0}^{\pi/2}{\sqrt{1 - \left(1 -\frac{s_2^2}{s_1^2}\right)\sin^2\theta}} d\theta \\
& = \frac{2s_1}{\pi} E\left(1 -\frac{s_2^2}{s_1^2}\right) \\
\end{align*}
where $E(\cdot)$ is the complete elliptic integral of the second kind (with parameter $m = k^2$). Thus altogether we obtain 
\[f_\sigma(X) = \sigma_{\max}(X) \sum_{n=0}^\infty \left(\frac{(2n)!}{2^{2n}(n!)^2}\right)^2\frac{
\left(1- \kappa^{-2}(X) \right)^n}{1-2n}\]
where $\kappa(X) = {\sigma_{\max}(X)}/{\sigma_{\min}(X)}$ is the condition number of $X.$

 \section{Proof of Theorem~\ref{cor:general-landscape}}\label{proof:generalPopulation}

 The proof of this result builds upon the next three lemmas. We will prove these lemmas and before we dive into the proof. Recall that $U \in O(d_1)$ and $V \in O(d_2)$ are any pair of matrices for which $X = U \sigma(X) V = \sum_i \sigma_i(X) U_i V_i^\top.$

 \begin{lemma}\label{lemma:stationary1}
The following are true.
\begin{enumerate}
 	\item \textbf{Anticorrelation.} The next equalities hold\[\dotp{U_1, w} \dotp{x,V_2} = \dotp{U_1, \bar w}\dotp{\bar x, V_2}\qquad \text{and} \qquad \dotp{U_2, w} \dotp{x,V_1} = \dotp{U_2, \bar w}\dotp{\bar x, V_1}. \]
 	\item \textbf{Singular values.} The singular values of $X$ satisfy 
 	\begin{equation*}
 	\arraycolsep=1.4pt\def\arraystretch{1.5}
 	\begin{array}{c}
 	\sigma_1(X) = \dotp{U_1, w}\dotp{x, V_1} - \dotp{U_1, \bar w} \dotp{\bar x, V_1} \geq 0,\\ \sigma_2(X) = \dotp{U_2, w}\dotp{x, V_2} - \dotp{U_2, \bar w} \dotp{\bar x, V_2} \geq 0.
 	\end{array}
 	\end{equation*}
 	\item \textbf{Correlation.} Assume that $\sigma_2(wx^\top - \bar w \bar x^\top) > 0,$ then $\spann\{x,\bar x\} = \spann\{V_1, V_2\}$, $\spann\{w,\bar w\} = \spann\{U_1, U_2\}$, and consequently,
 	\begin{align*}
 	\dotp{w,\bar w} &= \dotp{U_1, w} \dotp{U_1, \bar w} + \dotp{U_2, w}\dotp{U_2, \bar w},\\ 
 	\dotp{x,\bar x} &= \dotp{V_1, x} \dotp{V_1, \bar x} + \dotp{V_2, x} \dotp{V_2, \bar x}.
 	\end{align*} 
 \end{enumerate}  
\end{lemma}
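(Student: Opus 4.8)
The plan is to obtain all three parts from one computation: evaluating the bilinear forms $U_i^\top X V_j$ for $i,j\in\{1,2\}$ in two different ways. First I would observe that $X = wx^\top - \bar w\bar x^\top$ is a difference of two matrices of rank at most one, hence $\rank X \le 2$, so $\sigma_k(X)=0$ for $k\ge 3$ and the singular value decomposition reduces to $X = \sigma_1(X)U_1V_1^\top + \sigma_2(X)U_2V_2^\top$. Using the orthonormality of the columns of $U$ and of $V$, this representation gives $U_i^\top X V_j = \sigma_i(X)\,\delta_{ij}$ for $i,j\in\{1,2\}$, whereas expanding the definition of $X$ directly gives
\[
U_i^\top X V_j \;=\; \dotp{U_i,w}\dotp{x,V_j} \;-\; \dotp{U_i,\bar w}\dotp{\bar x,V_j}.
\]

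Given these two expressions, the first two parts are immediate. For \textbf{Anticorrelation} I would equate them at $(i,j)=(1,2)$ and $(i,j)=(2,1)$; the decomposition side vanishes, leaving precisely the two claimed identities. For \textbf{Singular values} I would equate them at $(i,j)=(1,1)$ and $(i,j)=(2,2)$, and the inequalities $\sigma_1(X),\sigma_2(X)\ge 0$ are automatic by the convention that singular values are nonnegative.

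For \textbf{Correlation} I would argue by comparing dimensions. The hypothesis $\sigma_2(X)>0$ forces $\rank X = 2$, so the column space of $X$ is the two-dimensional space $\spann\{U_1,U_2\}$; on the other hand every column of $X$ lies in $\spann\{w,\bar w\}$, a space of dimension at most two, so the two spaces coincide, and the symmetric argument with the row space yields $\spann\{x,\bar x\}=\spann\{V_1,V_2\}$. Since $\{U_1,U_2\}$ is then an orthonormal basis of the plane containing both $w$ and $\bar w$, I would expand $w=\dotp{U_1,w}U_1+\dotp{U_2,w}U_2$ and likewise for $\bar w$, and take the inner product of the two expansions to get the formula for $\dotp{w,\bar w}$; the same computation in the $\{V_1,V_2\}$ basis gives the formula for $\dotp{x,\bar x}$.

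I do not anticipate a genuine obstacle here: the argument is exactly the bookkeeping sketched above, the only point needing a moment's care being the rank-exactly-two claim under $\sigma_2(X)>0$ that drives the dimension count in \textbf{Correlation}. It is worth noting that none of the three statements uses that $(w,x)$ is stationary; they are purely structural facts about $X=wx^\top-\bar w\bar x^\top$ and any of its singular value decompositions. The stationarity conditions $Yx=0$, $Y^\top w=0$ and $\sigma(Y)\in\partial f(\sigma(X))$ will only be invoked when this lemma is combined with the remaining two in the proof of Theorem~\ref{cor:general-landscape}.
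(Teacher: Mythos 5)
Your proposal is correct and follows essentially the same route as the paper: items 1 and 2 are exactly the two-way evaluation of $U_i^\top X V_j$ (the paper phrases it as $U_1^\top X V_2 = 0$, etc.), and item 3 is the same rank-two span identification followed by expansion in the orthonormal bases $\{U_1,U_2\}$ and $\{V_1,V_2\}$, with the paper merely making the containment explicit via $X^\top U_i = \sigma_i(X) V_i$ rather than by an abstract column/row-space dimension count. Your closing remark that stationarity is nowhere used is also accurate.
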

\begin{proof}
The first equality in item one follows by observing that $U_1^\top X V_2 = 0$, expanding the expression on the left-hand-side gives the result. The same argument starting from $U_2^\top X V_1 = 0$ gives the other equality. The second item follows by definition. 

To prove the last item note that 
\[\dotp{U_i, w} x - \dotp{U_i,\bar w} \bar x = X^\top U_i = \sigma_i(X) V_i\qquad \forall i \in \{1,2\}.\]
Dividing through by $\sigma_i(X)$ in the previous inequality shows that $\spann\{x,\bar x\} = \spann\{V_1, V_2\}$. Therefore, we can write $x = \dotp{x, V_1} V_1 + \dotp{x, V_2} V_2$ and $\bar x = \dotp{\bar x, V_1} V_1 + \dotp{\bar x, V_2} V_2.$ Hence, 
\[\dotp{x,\bar x} = \left\langle \dotp{x, V_1} V_1 + \dotp{x, V_2} V_2, \dotp{\bar x, V_1} V_1 + \dotp{\bar x, V_2} V_2\right\rangle = \dotp{V_1, x} \dotp{V_1, \bar x} + \dotp{V_2, x} \dotp{V_2, \bar x}\]
An analogous argument shows the statement for $w$ and $\bar w$.   
\end{proof}
\begin{lemma}\label{lemma:stationary2}
The following hold true.
\begin{enumerate}
 	\item \textbf{Maximum correlation}.\begin{equation} \label{ineq:max_correlation}
 	\arraycolsep=1.4pt\def\arraystretch{1.5}
 	\begin{array}{c}
 	\max\{|\sigma_1 (Y) \dotp{v_1, x}|, |\sigma_2(Y) \dotp{v_2, x}|\} \leq \|Yx\|, \\ \max\{|\sigma_1 (Y) \dotp{u_1, w}|, |\sigma_2(Y) \dotp{u_2, w}|\} \leq \|Y^\top w\|.
 	\end{array}
 	\end{equation}
 	\item \textbf{Objective gap}. \begin{equation}\label{ineq:subgradient_singular_values} g(w,x) - g(\bar w, \bar x) \leq \sigma_1(Y)\sigma_1(X) + \sigma_2(Y) \sigma_2(X).\end{equation}
 \end{enumerate} 
\end{lemma}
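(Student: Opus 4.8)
The plan is to dispatch the two items in turn; each becomes short once we invoke the simultaneous ordered singular value decomposition of $X$ and $Y$ supplied by Theorem~\ref{thm:spectral_sub}. Since $Y \in \partial f_\sigma(X)$, fix orthogonal matrices $U \in O(d_1)$, $V \in O(d_2)$ with $X = \sum_i \sigma_i(X)\, u_i v_i^\top$ and $Y = \sum_i \sigma_i(Y)\, u_i v_i^\top$, where $u_i, v_i$ are the $i$-th columns of $U, V$ --- these are the vectors appearing in the statement. For the \textbf{maximum correlation} bound I would expand $Yx = \sum_i \sigma_i(Y)\dotp{v_i, x}\, u_i$ and use orthonormality of the $u_i$ to get $\norm{Yx}^2 = \sum_i \sigma_i(Y)^2 \dotp{v_i, x}^2$; discarding all but the $j$-th summand gives $\sigma_j(Y)^2\dotp{v_j, x}^2 \le \norm{Yx}^2$, and taking square roots for $j \in \{1,2\}$ yields the first inequality in \eqref{ineq:max_correlation}. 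The second is identical, starting from $Y^\top w = \sum_i \sigma_i(Y)\dotp{u_i, w}\, v_i$ and using orthonormality of the $v_i$.

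For the \textbf{objective gap} I would first note $g(\bar w, \bar x) = f_\sigma(\bar w\bar x^\top - \bar w\bar x^\top) = f_\sigma(0)$ and $g(w,x) = f_\sigma(X)$. Since $f_\sigma$ is convex and $Y \in \partial f_\sigma(X)$ --- so that the limiting subdifferential coincides with the convex one and obeys the global subgradient inequality --- evaluating that inequality at the matrix $0$ gives $f_\sigma(0) \ge f_\sigma(X) + \dotp{Y,\, 0 - X}$, i.e.\ $g(w,x) - g(\bar w, \bar x) \le \dotp{Y, X}$. It then remains to evaluate $\dotp{Y, X}$: using the decomposition above, a direct computation gives $\dotp{Y, X} = \trace(Y^\top X) = \sum_i \sigma_i(Y)\sigma_i(X)$, and since $X = wx^\top - \bar w\bar x^\top$ has rank at most two, only the first two terms survive, which is exactly \eqref{ineq:subgradient_singular_values}.

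I do not expect a serious obstacle here --- the argument is essentially bookkeeping --- but two points deserve care. First, the identity $\dotp{Y, X} = \sum_i \sigma_i(Y)\sigma_i(X)$ hinges on the alignment of singular vectors in Theorem~\ref{thm:spectral_sub}; without it one could only bound $\dotp{Y,X}$ by $\sum_i \sigma_i(Y)\sigma_{\pi(i)}(X)$ for some permutation $\pi$. Second, the only structural hypothesis used in the objective gap is convexity of $f_\sigma$; for a merely weakly convex $f_\sigma$ the right-hand side of \eqref{ineq:subgradient_singular_values} would pick up an extra term $\tfrac{\rho}{2}\norm{X}_F^2$.
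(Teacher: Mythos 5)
Your proof is correct and follows essentially the same route as the paper: both items are immediate consequences of the simultaneous ordered SVD of $X$ and $Y$ from Theorem~\ref{thm:spectral_sub}, with item 1 obtained by reading off components of $Yx$ (the paper tests $\|Yx\|\ge\langle z,Yx\rangle$ with $z=\pm U_1,\pm U_2$, which is the same computation) and item 2 from the convex subgradient inequality at $0$ together with $\langle Y,X\rangle=\sum_i\sigma_i(Y)\sigma_i(X)$. Your closing remarks on the role of the aligned singular vectors and of convexity are accurate.
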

\begin{proof}
Note that $\|Yx\| \geq \dotp{z, Yx}$ for all $z \in \SS^{d-1}$, then the very 
first claim follows by testing with $z \in \{\pm U_1, \pm U_2\}.$ An analogous 
argument gives the statement for $ w.$ 
Recall that $f$ is convex, consequently $f_\sigma$ is convex and the subgradient inequality gives
\[g(w,x) - g(\bar w, \bar x) = f_\sigma(X) - f_\sigma(0) \leq \dotp{Y, X} = \sigma_1(Y) \sigma_1(X) + \sigma_2(Y) \sigma_1(X).\]
\end{proof}
\begin{lemma}\label{lemma:rank1char}
Assume $\bar w \in \RR^{d_1 }$ and $\bar x \in \RR^{d_2}$ are nonzero vectors. Set $X = wx^\top + \bar w \bar x^\top$, then $X$ is a rank $1$ matrix if, and only if, $w = \lambda \bar w$ or $x = \lambda \bar x$ for some $\lambda \in \RR.$
\end{lemma}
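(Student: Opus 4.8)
The plan is to prove the two implications separately: the backward direction by an explicit rank-one factorization, and the forward direction by computing the kernel of $X$. (I read ``rank $1$'' in the statement as ``rank at most $1$''; genuinely rank-zero instances do occur, e.g.\ when $w = -\bar w$ and $x = \bar x$.)

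For the easy direction, suppose first $w = \lambda \bar w$ for some $\lambda \in \RR$. Then $X = wx^\top + \bar w \bar x^\top = \bar w\,(\lambda x + \bar x)^\top$ is an outer product and hence has rank at most one. Symmetrically, if $x = \lambda \bar x$, then $X = (\lambda w + \bar w)\,\bar x^\top$ has rank at most one.

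For the forward direction, assume $X$ has rank at most one and split into two cases according to whether $\{w,\bar w\}$ is linearly independent. If $w$ and $\bar w$ are linearly \emph{dependent}, then since $\bar w \neq 0$ we may write $w = \lambda \bar w$ for some scalar $\lambda$, and we are done. If instead $w$ and $\bar w$ are linearly independent, observe that for every $v \in \RR^{d_2}$ we have $Xv = \dotp{x,v}\,w + \dotp{\bar x,v}\,\bar w$, and by independence this vanishes precisely when $\dotp{x,v} = \dotp{\bar x,v} = 0$. Thus $\ker X = \{x\}^\perp \cap \{\bar x\}^\perp = \bigl(\spann\{x,\bar x\}\bigr)^\perp$, so the rank-nullity theorem gives $\rank X = \dim \spann\{x,\bar x\}$. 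Since $\rank X \le 1$ and $\bar x \neq 0$, this forces $\dim\spann\{x,\bar x\} = 1$, i.e.\ $x = \lambda\bar x$ for some $\lambda \in \RR$, which finishes the case analysis.

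There is no substantive obstacle here, as the claim is elementary linear algebra. The only points requiring a little care are verifying that the dichotomy ``$\{w,\bar w\}$ dependent or independent'' is exhaustive, and that the degenerate (rank-zero) configurations are absorbed into the ``rank at most one'' reading of the statement.
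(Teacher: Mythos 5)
Your proof is correct, and it takes a genuinely different (and cleaner) route than the paper's. The paper argues entrywise: it first disposes of the case where $x$ and $\bar x$ have different supports, then normalizes the first components of $x$ and $\bar x$ to one and compares the columns of $X$ one at a time to force $x = \bar x$ — an argument that is coordinate-dependent and somewhat sketchy about its normalizations. You instead split on whether $\{w,\bar w\}$ is linearly independent and, in the independent case, compute $\ker X = \left(\spann\{x,\bar x\}\right)^{\perp}$ directly from $Xv = \dotp{x,v}\,w + \dotp{\bar x,v}\,\bar w$, so that rank--nullity gives $\rank X = \dim\spann\{x,\bar x\}$ and the conclusion falls out. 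This is coordinate-free, absorbs all degenerate configurations without special pleading, and makes explicit the ``rank at most one'' reading of the statement, which is indeed how the lemma is invoked elsewhere in the paper (e.g.\ in Case 1 of the proof of Theorem~\ref{cor:general-landscape}, where $\sigma_2(X)=0$ is all that is known). The only thing the paper's approach ``buys'' is avoiding the rank--nullity theorem, at the cost of a longer and less transparent case analysis; your version is preferable.
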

\begin{proof}
It is trivial to see that if the later holds then $X$ is rank $1.$ Let us prove the other direction. Notice that if any of the vectors is zero we are done, so assume that none of them is. Recall that all the columns of $X$ are span from one vector. Consider the case where $x$ and $\bar x$ have different support (i.e. set of nonzero entries), then it is immediate that $w$ and $\bar w$ have to be multiples of each other.

Now assume that this is not the case, without loss of generality assume that $w \notin \spann\{\bar w\}$ and $x$ and $\bar x$ are nonzero and their first component is equal to one. Then the first column of $X$ is equal to $w + \bar w$, furthermore the second column is equal to $x_2w + \bar x_2 \bar w$ has to be a multiple of the first one. By assumption $w, \bar w$ are linearly independent therefore $x_2 = \bar x_2.$ Using the same procedure for the rest of the entries we obtain $x = \bar x.$
\end{proof}

We are now in good shape to describe the landscape of the function $g.$
\begin{proof}[Proof of Theorem~\ref{cor:general-landscape}]
To prove that at least one of the conditions hold we will show that if the first two don't hold then at least one of the other two have two hold. Assume that that the first two conditions are not satisfied, therefore $g(w,x) > g(\bar w, \bar x)$ and $(w,x) \neq (0,0).$ Let us furnished some facts before we prove this is the case. Notice that from~\eqref{ineq:subgradient_singular_values} we can derive
\[0 < \sigma_1(Y) \sigma_1(X) + \sigma_2(Y) \sigma_1(X) \leq 2 \sigma_1(Y{})\sigma_1(X),\]
thus $\sigma_1(Y), \sigma_1(X) > 0.$ On the other hand, since $(w,x)$ is critical inequalities~\eqref{ineq:max_correlation} immediately give 
\begin{align} \label{eq:sigma2is0} \begin{split}
&\sigma_1(Y)\dotp{V_1, x} = \sigma_2(Y) \dotp{V_2, x} = 0, \qquad \text{and} \qquad \sigma_1(Y)\dotp{U_1, w} = \sigma_2(Y) \dotp{U_2, w} = 0.   
\end{split}
\end{align}
So $\dotp{V_1, x} = 0$ and $\dotp{U_1, w} = 0$, then the first claim in Lemma~\ref{lemma:stationary1} gives. Additionally, this and the second claim in Lemma~\ref{lemma:stationary1} imply that 
\begin{align*}
\dotp{U_1, \bar w} \dotp{\bar x, V_2} = \dotp{U_2, \bar w}\dotp{\bar x, V_1} = 0,\qquad\text{and}\qquad-\dotp{U_1, \bar w} \dotp{\bar x, V_1} = \sigma_1(X) > 0.
\end{align*}
Combining these two gives $\dotp{U_2, \bar w} = \dotp{\bar x, V_2} = 0.$ Then by applying the second claim in Lemma~\ref{lemma:stationary1} we get $\sigma_2(X) = \dotp{U_2, w} \dotp{x , V_2}$. Using Equations~\eqref{eq:sigma2is0} we conclude that $\sigma_2(Y)\sigma_2(X) = 0.$ 

Now we will show that at least one of the conditions holds, depending on the value of $\sigma_2(X),$ let us consider two cases: 

\textbf{Case 1.} Assume $\sigma_2(X) = 0.$ This means that $X = wx^\top - \bar w \bar x^\top$ is a rank $1$ matrix. By Lemma~\ref{lemma:rank1char} we have that $w = \lambda \bar w$ or $x = \lambda \bar x$ for some $\lambda \in \RR.$ Note that if $w = \lambda \bar w$ then $U_1 = \pm \bar w/ \|\bar w\|$, then using Equation~\ref{eq:sigma2is0} we get that $\lambda \|\bar w\| = 0.$ Which implies that $\lambda = 0,$ and consequently $wx^\top = 0.$ An analogous argument applies when $x = \lambda \bar x.$ By assumption we have that $Yx = 0$ and $Y^\top w =0$. Additionally, since $X = -\bar w \bar x^\top$ we get that that $U_1 = \pm \bar w/ \|\bar w\|$ and $V_1 = \pm\bar x / \|\bar x\|.$ Recall that $Y = U \diag (\sigma (Y)) V^\top,$ then using the fact that $(w,x)$ is critical we conclude $\dotp{w,\bar w} = \dotp{x, \bar x} = 0.$ Implying that property three holds.

\textbf{Case 2.} Assume $\sigma_2(X) \neq 0.$ This immediately implies that $\sigma_2(Y) = 0.$ By the third part of Lemma~\ref{lemma:stationary1} we get that \[\dotp{x, \bar x} = \dotp{V_1, x} \dotp{V_1, \bar x} + \dotp{ V_2, x}\dotp{V_2, \bar x} = 0\]and analogously $\dotp{w, \bar w} = 0.$ Moreover, since $w \perp \bar w$ and $x \perp \bar w$ (and none of them are zero by assumption) we get that $(w/\|w\|, x/\|x\|)$ and $(\bar w/\|\bar w\|, \bar x/\|\bar x\|)$ are pairs of left and right singular vectors, with associated singular values $w^\top X x = \|w x^\top \|$ and $\bar w^\top X \bar x = \|\bar w \bar x^\top \|$, respectively. Assume that $\| w x^\top \| \geq \|\bar w \bar x^\top \|,$ thus 
$0 = w^\top Y x = \|wx^\top\| \sigma_1(Y) > 0,$ yielding a contradiction. Hence the condition four holds true.

Finally, we will prove the reverse statement. Assume that $(\bar w, \bar x)$ minimize $g.$ In this case, the set of points that satisfies the first conditions is the collection of minimizers so they are critical. Clearly $(w,x) = 0$ is always a stationary point, since $\|Y^\top w\| = \|Y x\| = 0$. Now let's construct a certificate $Y \in \partial f_\sigma(X)$ that ensures criticality for the remaining cases. 

Assume that $(w,x)$ that $wx^\top = 0,$ without loss of generality let's assume that $w = 0.$ Further, assume that there exists $Y \in \partial f_\sigma (w,x)$ such that $Yx = 0$ and $\dotp{x, \bar x} = 0.$ It is immediate that $(w,x)$ is a stationary point.

Assume that $(w,x)$ is such that $0 <\|wx^\top\| < \|\bar w \bar x^\top\|,$ $\dotp{w,\bar w} = \dotp{x,\bar x} = 0 $ and there exists $Y \in \partial f_\sigma(X)$ with $\sigma_2(Y) =0.$ By our argument above since $\|wx^\top\| < \|\bar w \bar x^\top\|$, any pair of admissible matrices $U,V$ satisfy $U_1 = \pm \bar w/ \|\bar w\|$ and $V_1 = \pm \bar x/ \|\bar x\|.$ Therefore 
		\[Y x = (\sigma_1(Y) U_1 V_1^\top) x = \pm \frac{\sigma_1(Y)}{\|\bar x\|} \dotp{\bar x, x} U_1 = 0, \]
		analogously $Y^\top w = 0.$
\end{proof}
\subsection{Proof of Lemma~\ref{lemma:partial_derivatives}}
\label{subsec:proof_lemma_partial_derivatives}
It is well-known that if $(a_1, a_2, \dots, a_d)$ is fixed (i.e. if we conditioned on it), then 
\[\sum_{i=1}^d a_i b_i s_i \stackrel{(d)}{=}\left(\sum_{i=1}^d (a_i s_i)^2\right)^{\frac{1}{2}} b\]
and $b$ is a standard normal random variable independent of the rest of the data. Therefore 
\begin{align*}
 f(s_1, \dots, s_d) & = \EE \left(\left|\sum_{i=1}^d a_i b_i s_i \right|\right)  = \EE \left(\EE\left( \left|\sum_{i=1}^d a_i b_i s_i \right| \Big| a_1, \dots, a_d\right) \right)\\ & = \EE \left(\left(\sum_{i=1}^d (a_i s_i)^2\right)^{\frac{1}{2}} \EE\left( b \mid a_1, \dots, a_d\right) \right) = \sqrt{\frac{2}{\pi}} \EE \left(\sum_{i=1}^d (a_i s_i)^2\right)^{\frac{1}{2}}.
 \end{align*}
Now, we need a technical tool in order to procede. 
 \begin{theorem}[\textbf{Leibniz Integral Rule}, Theorem 5.4.12 in \cite{MR1934675}]\label{theo:leibniz}
 Let $U$ be an open subset of $\RR^d$ and $\Omega$ be a measure space. Suppose that the function $h : U \times \Omega \rightarrow \RR$ satisfies the following: 
 \begin{enumerate}
 	\item For all $x \in U$, the function $h(x, \cdot)$ is Lebesgue integrable.
 	\item For almost all $w \in \Omega$, if we define $h^\omega(\cdot) = f(\cdot, \omega)$ the partial derivatives $\frac{\partial h^\omega}{\partial x_i} (x)$ exists for all $x \in U$.
 	\item There is an integrable function $\Phi : \Omega \rightarrow \RR$ such that $|\frac{\partial h^\omega}{\partial x_i} (x)| \leq \Phi(\omega)$ for all $x \in U$ and almost every $\omega \in \Omega.$
 \end{enumerate}
 Then, we have that for all $x \in U$ 
 \[\frac{\partial }{\partial  x_i} \int_\Omega h(x,\omega) d\omega = \int_\Omega \frac{\partial h^\omega}{\partial x_i}(x) d\omega. \]
 \end{theorem}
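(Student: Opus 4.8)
The plan is to reduce the statement to a single application of the dominated convergence theorem, with the mean value theorem supplying the needed domination. Fix $x\in U$ and an index $i\in\{1,\dots,d\}$, and let $e_i$ denote the $i$-th coordinate vector. Since $U$ is open there is $\delta>0$ so that the segment $\{x+te_i : |t|\le\delta\}$ is contained in $U$. For $0<|t|\le\delta$ define the difference-quotient function $g_t(\omega)=\tfrac{1}{t}\big(h(x+te_i,\omega)-h(x,\omega)\big)$. By hypothesis~1 every $h(y,\cdot)$ with $y\in U$ is integrable, so $g_t$ is integrable and, by linearity of the integral, $\int_\Omega g_t(\omega)\,d\omega=\tfrac1t\big(\int_\Omega h(x+te_i,\omega)\,d\omega-\int_\Omega h(x,\omega)\,d\omega\big)$. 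Hence it suffices to prove that $\lim_{t\to 0}\int_\Omega g_t\,d\omega$ exists and equals $\int_\Omega \tfrac{\partial h^\omega}{\partial x_i}(x)\,d\omega$, because that limit is by definition $\tfrac{\partial}{\partial x_i}\int_\Omega h(x,\omega)\,d\omega$.

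To interchange the limit with the integral I would argue along sequences. Let $(t_n)$ be any sequence with $t_n\to 0$ and $0<|t_n|\le\delta$. For almost every $\omega$, hypothesis~2 guarantees that $h^\omega$ is differentiable in its $i$-th argument throughout $U$; applying the one-dimensional mean value theorem to $s\mapsto h^\omega\big(x+(s-x_i)e_i\big)$ on the interval with endpoints $x_i$ and $x_i+t_n$ produces a point $\xi_n=\xi_n(\omega)$ strictly between them such that $g_{t_n}(\omega)=\tfrac{\partial h^\omega}{\partial x_i}\big(x+(\xi_n-x_i)e_i\big)$. Hypothesis~3 then yields the uniform bound $|g_{t_n}(\omega)|\le\Phi(\omega)$ for a.e.\ $\omega$ and all $n$, with $\Phi$ integrable. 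Moreover $g_{t_n}(\omega)\to\tfrac{\partial h^\omega}{\partial x_i}(x)$ pointwise a.e.\ by the very definition of the partial derivative. The dominated convergence theorem applies and gives $\int_\Omega g_{t_n}\,d\omega\to\int_\Omega \tfrac{\partial h^\omega}{\partial x_i}(x)\,d\omega$. Since $(t_n)$ was an arbitrary null sequence in $(-\delta,\delta)\setminus\{0\}$, the function $t\mapsto\int_\Omega g_t\,d\omega$ has limit $\int_\Omega \tfrac{\partial h^\omega}{\partial x_i}(x)\,d\omega$ as $t\to 0$, which is exactly the claim.

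The remaining points are bookkeeping rather than substance: restricting to $|t|\le\delta$ keeps all evaluation points inside $U$; the exceptional $\omega$-null sets coming from hypotheses~2 and~3 are combined into a single null set; and $\omega\mapsto\tfrac{\partial h^\omega}{\partial x_i}(x)$ is measurable, being an a.e.\ pointwise limit of the measurable functions $g_{t_n}$, so the right-hand integral is well defined. I expect the mean value theorem step to be the conceptual crux, since it is what upgrades the hypothesis that the \emph{derivative} is dominated into the hypothesis that the \emph{difference quotients} are dominated, which is what dominated convergence actually needs.
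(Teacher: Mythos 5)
Your proof is correct. Note that the paper does not prove this statement at all: it is imported verbatim as Theorem 5.4.12 from the cited reference and used as a black box in the proof of Lemma~\ref{lemma:partial_derivatives}. Your argument is the standard textbook proof of the Leibniz rule, and every step checks out: the mean value theorem correctly converts the a.e.\ domination of the partial derivative (hypothesis~3) into domination of the difference quotients, which is exactly what the dominated convergence theorem requires; the sequential reduction of the limit $t\to 0$ is handled properly; and the measurability of $\omega\mapsto\frac{\partial h^\omega}{\partial x_i}(x)$ as an a.e.\ limit of measurable functions is the right way to see that the right-hand integral is well defined. There is nothing to add.
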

 
 This theorem tell us that we can swap partial derivatives and integrals provided that the function satisfies all the conditions above. Consider $\Omega$ to be the set $\RR^d$ endow with the Borel $\sigma$-algebra and the multivariate Gaussian measure. Define $h : \RR^d \times \Omega \rightarrow \RR$ to be given by $$(s , a) \mapsto \left(\sum_{i=1}^d (a_is_i)^2\right)^\frac{1}{2}.$$ Take $s \in \RR^d \setminus \{0\}$ to be an arbitary element, set $S = \{u \in \RR^d \mid \supp(s) \subseteq \supp(u) \}$, and define $U = B_\epsilon(s)$ with $\epsilon$ small enough such that $U \subseteq S$ and $\inf_{u \in U } \min_{i \in \supp(s)} |u_i| > 0$. Then it is easy to see that the first two conditions hold, in particular the second condition hold for all $a \neq 0$. Further, for any $x \in U$
 \begin{align*}\left|\frac{\partial h^a}{\partial s_j}(x)\right| = \left|\frac{a_j^2x_j}{\left(\sum_i^d (a_ix_i)^2\right)^{\frac{1}{2}}} \right| & \leq \frac{\sup_{u \in U} \|u\|_\infty}{\inf_{u \in U} \min_{i \in \supp(s)}  |u_i|} \frac{\sum_{i \in \supp(s)} a_j^2}{\left(\sum_{i \in \supp(s)} a_i^2\right)^{\frac{1}{2}}} \\ 
 & \leq \frac{\sup_{u \in U} \|u\|_\infty}{\inf_{u \in U} \min_i|u_i|}{\left(\sum_{i \in \supp(s)} a_i^2\right)^{\frac{1}{2}}},  \end{align*}
 where the last function is integrable with respect to the Gaussian measure. Thus, Theorem~\ref{theo:leibniz} ensures that the function $f$ is differentiable at every nonzero point. Consequently, for all $s \in \RR^d \setminus \{0\}$ 
 \begin{equation*}
  \frac{\partial f}{\partial s_j}(s) = \sqrt{\frac{2}{\pi}}s_j \; \EE \frac{a_j^2}{\left(\sum_i^d (a_is_i)^2\right)^{\frac{1}{2}}}.
 \end{equation*}
\section{Approximate critical points of a spectral function family}\label{app:approximate}

In Section~\ref{sec:population} we characterize the points for which $0 \in \partial f_P(w,x).$ In order to derive similar results for $f_S$ we will need to understand $\varepsilon$-critical points of $f_P,$ i.e. points $(w,x)$ for which $\dist(0, \partial f(w,x)) \leq \varepsilon.$ Just as before we adopt a more general viewpoint and consider spectral functions of the form $g(w,x) = f \circ \sigma (wx^\top - \bar w \bar x^\top).$

The main result in this section is Theorem~\ref{theo:epsilon_critical}. Given the fact that we don't have second order information in the form of a Hessian, we need to appeal to a different kind of growth condition. Turns out that the natural condition for this problem is
 \begin{equation} \label{eq:grow_cond}
 g(w,x) - g(\bar w, \bar x) \geq \kappa \norm{wx^\top - \bar w \bar x^\top}_F  \qquad \forall (w,x) \in \RR^{d_1}\times \RR^{d_2},
 \end{equation}
for some $\kappa > 0.$ Intuitively this means that the function grows sharply away from minimizers.

Before we dive into the main theorem, let us provide some technical lemmas. 
\begin{lemma}\label{lemma:lower_bound_singular_values}
Suppose there exists a constant $\kappa > 0$ such that \eqref{eq:grow_cond} holds. Then, for any point $(w,x)$ such that $wx^\top \neq \bar w \bar x^\top$ we have $\sigma_1(Y)+\sigma_2(Y) \geq \kappa. $
\end{lemma}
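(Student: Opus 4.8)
The plan is to chain together three ingredients: the objective-gap inequality already recorded in Lemma~\ref{lemma:stationary2}, the assumed sharp growth condition \eqref{eq:grow_cond}, and Cauchy--Schwarz. The starting point is the observation that $Y \in \partial f_\sigma(X)$ shares a simultaneous ordered singular value decomposition with $X = wx^\top - \bar w\bar x^\top$ (Theorem~\ref{thm:spectral_sub}), and since $X$ has rank at most two we have $\sigma_i(X) = 0$ for $i \geq 3$. Hence $\dotp{Y, X} = \sum_i \sigma_i(Y)\sigma_i(X) = \sigma_1(Y)\sigma_1(X) + \sigma_2(Y)\sigma_2(X)$, and $\|X\|_F = \sqrt{\sigma_1(X)^2 + \sigma_2(X)^2}$.

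Next I would invoke \eqref{ineq:subgradient_singular_values}, which gives $g(w,x) - g(\bar w,\bar x) \leq \sigma_1(Y)\sigma_1(X) + \sigma_2(Y)\sigma_2(X)$, and combine it with the growth condition \eqref{eq:grow_cond} to obtain
\[
\kappa \sqrt{\sigma_1(X)^2 + \sigma_2(X)^2} \;=\; \kappa \|wx^\top - \bar w\bar x^\top\|_F \;\leq\; \sigma_1(Y)\sigma_1(X) + \sigma_2(Y)\sigma_2(X).
\]
Applying Cauchy--Schwarz to the right-hand side bounds it by $\sqrt{\sigma_1(Y)^2 + \sigma_2(Y)^2}\,\sqrt{\sigma_1(X)^2 + \sigma_2(X)^2}$. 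Since $wx^\top \neq \bar w\bar x^\top$ the factor $\sqrt{\sigma_1(X)^2+\sigma_2(X)^2} = \|X\|_F$ is strictly positive, so I can cancel it from both sides, leaving $\kappa \leq \sqrt{\sigma_1(Y)^2 + \sigma_2(Y)^2}$. Finally, using $\sqrt{a^2+b^2} \leq a+b$ for $a,b \geq 0$ yields $\kappa \leq \sigma_1(Y) + \sigma_2(Y)$, which is the claim.

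There is no real obstacle here: the only thing to notice is that the rank-two structure of $X$ collapses the inner product $\dotp{Y,X}$ to a two-term sum matching the Frobenius norm of $X$, after which Cauchy--Schwarz does the rest. (One could in fact extract the slightly stronger conclusion $\|Y\|_F \geq \kappa$ once $\partial f$ is known to vanish in the coordinates where $\sigma(X)$ vanishes, but the weaker statement about $\sigma_1(Y)+\sigma_2(Y)$ is all that is needed downstream.)
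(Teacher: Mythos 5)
Your proof is correct and follows essentially the same route as the paper: both combine the growth condition \eqref{eq:grow_cond} with the objective-gap bound \eqref{ineq:subgradient_singular_values} and then divide by $\|wx^\top-\bar w\bar x^\top\|_F>0$. The only cosmetic difference is the final estimate — you use Cauchy--Schwarz plus $\sqrt{a^2+b^2}\le a+b$, while the paper bounds $\sigma_i(X)\le\|X\|_F$ directly — and your version even yields the marginally sharper conclusion $\sqrt{\sigma_1(Y)^2+\sigma_2(Y)^2}\ge\kappa$.
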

\begin{proof}
By definition $\sigma_2(X) \leq \sigma_1(X) \leq \norm{wx^\top - \bar w \bar x^\top}_F$. Then, applying~\eqref{ineq:subgradient_singular_values} gives 
\begin{align*}\kappa \norm{wx^\top - \bar w \bar x^\top}_F \leq g(w,x) - g(\bar w, \bar x) &\leq \sigma_1(Y) \sigma_1(X) + \sigma_2(Y)\sigma_2(X)\\ & \leq (\sigma_1(Y) + \sigma_2(Y))\norm{wx^\top - \bar w \bar x^\top}_F.\end{align*}
\end{proof}
\begin{lemma}
Suppose there exists a constant $\kappa > 0$ such that \eqref{eq:grow_cond} holds. Then any pair $(w,x) \in \RR^{d_1 +d_2} \setminus \{0\}$ satisfies 
\[\frac{1}{{\min\{\norm{w},\norm{x}\}}}\left(\kappa \norm{wx^\top - \bar w \bar x^\top} - (\sigma_1(Y) + \sigma_2(Y))\norm{\bar w \bar x^\top} \right) \leq \dist(0; \partial g(w,x)).\]
\end{lemma}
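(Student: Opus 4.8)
The plan is to read off $\dist(0;\partial g(w,x))$ from the chain-rule description $\partial g(w,x) = \set{(Yx, Y^\top w)\mid Y\in\partial f_\sigma(X)}$ with $X = wx^\top - \bar w\bar x^\top$, and to control the relevant subgradient by squeezing the scalar $\dotp{Y,X}$ between an estimate ``from below'' (convexity of $f_\sigma$ together with the growth condition~\eqref{eq:grow_cond}) and two complementary estimates ``from above'' obtained by expanding $X$. Since $f_\sigma$ is a finite convex function, $\partial f_\sigma(X)$ is nonempty, convex and compact, so $\dist(0;\partial g(w,x)) = \norm{(Yx, Y^\top w)}$ for some $Y\in\partial f_\sigma(X)$; I fix such a $Y$ from now on.

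First I would record the two-sided control of $\dotp{Y,X}$. For the lower bound, the subgradient inequality for the convex function $f_\sigma$ (the step underlying~\eqref{ineq:subgradient_singular_values}) gives $g(w,x) - g(\bar w,\bar x) = f_\sigma(X) - f_\sigma(0) \le \dotp{Y,X}$, and then~\eqref{eq:grow_cond} together with $\norm{X}_F\ge\norm{X}$ yields $\dotp{Y,X}\ge\kappa\norm{wx^\top-\bar w\bar x^\top}$. For the upper bounds I would write $\dotp{Y,X} = \dotp{Y,wx^\top} - \dotp{Y,\bar w\bar x^\top} = \dotp{w,Yx} - \dotp{\bar w,Y\bar x}$ and estimate $\dotp{w,Yx}\le\norm{w}\norm{Yx}$ and $-\dotp{\bar w,Y\bar x}\le\norm{\bar w}\norm{Y\bar x}\le\sigma_1(Y)\norm{\bar w}\norm{\bar x} = \sigma_1(Y)\norm{\bar w\bar x^\top}\le(\sigma_1(Y)+\sigma_2(Y))\norm{\bar w\bar x^\top}$, giving
\[\dotp{Y,X}\le\norm{w}\norm{Yx} + (\sigma_1(Y)+\sigma_2(Y))\norm{\bar w\bar x^\top};\]
rewriting the same first term as $\dotp{w,Yx} = \dotp{Y^\top w,x}\le\norm{x}\norm{Y^\top w}$ gives the mirror bound with $\norm{x}\norm{Y^\top w}$ replacing $\norm{w}\norm{Yx}$.

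Chaining the lower and upper bounds and rearranging yields both $\norm{w}\norm{Yx}\ge\kappa\norm{wx^\top-\bar w\bar x^\top} - (\sigma_1(Y)+\sigma_2(Y))\norm{\bar w\bar x^\top}$ and $\norm{x}\norm{Y^\top w}\ge\kappa\norm{wx^\top-\bar w\bar x^\top} - (\sigma_1(Y)+\sigma_2(Y))\norm{\bar w\bar x^\top}$. To close, I would split on which of $\norm{w},\norm{x}$ is the smaller: if $\norm{w}\le\norm{x}$ I use $\norm{(Yx,Y^\top w)}\ge\norm{Yx}$ and the first inequality, otherwise I use $\norm{(Yx,Y^\top w)}\ge\norm{Y^\top w}$ and the second; in both cases $\min\set{\norm{w},\norm{x}}\cdot\dist(0;\partial g(w,x)) = \min\set{\norm{w},\norm{x}}\cdot\norm{(Yx,Y^\top w)}$ is at least $\kappa\norm{wx^\top-\bar w\bar x^\top} - (\sigma_1(Y)+\sigma_2(Y))\norm{\bar w\bar x^\top}$, which is the claim after dividing by $\min\set{\norm{w},\norm{x}}$. (In the degenerate case $\min\set{\norm{w},\norm{x}} = 0$, say $w = 0$, one has $wx^\top\ne\bar w\bar x^\top$, so Lemma~\ref{lemma:lower_bound_singular_values} forces $\sigma_1(Y)+\sigma_2(Y)\ge\kappa$ and the right-hand side of the claim is nonpositive, hence the inequality holds trivially.)

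The argument is short and the individual estimates are routine; the one genuinely load-bearing move is to expand $\dotp{Y,X}$ in the two \emph{different} ways --- through $\dotp{w,Yx}$ and through $\dotp{Y^\top w,x}$ --- so that whichever of $\norm{w},\norm{x}$ is small can be matched with the corresponding coordinate of the subgradient $(Yx,Y^\top w)$; the only other thing requiring a word of care is the passage from a pointwise-in-$Y$ bound to a statement about $\dist(0;\partial g(w,x))$, which is why I pick $Y$ to realize the distance and dispose of the $\min\set{\norm{w},\norm{x}}=0$ case separately.
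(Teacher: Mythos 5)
Your proof is correct and follows essentially the same route as the paper's: pick $Y$ realizing the distance, sandwich $\dotp{Y,X}$ between the growth/convexity lower bound and the two expansions through $\dotp{w,Yx}$ and $\dotp{Y^\top w,x}$, bound $|\bar w^\top Y\bar x|$ by $\sigma_1(Y)\norm{\bar w\bar x^\top}$, and take whichever of $\norm{w},\norm{x}$ is smaller. Your explicit treatment of the degenerate case $\min\{\norm{w},\norm{x}\}=0$ via Lemma~\ref{lemma:lower_bound_singular_values} is a small point of care the paper's proof omits.
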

\begin{proof}
Notice that the result holds trivially for any pair such that $wx^\top = \bar w \bar x^\top.$ Let's assume that this is not the case. Recall that $\partial g(w,x) = \partial f_\sigma(X) x \times (\partial f_\sigma(X))^\top w.$ Pick $Y \in \partial g(w,x)$ such that $\dist(0, \partial g(w,x)) = \sqrt{\|Y x\|^2 + \|Y^\top w\|^2}.$ Using the convexity of $f_\sigma$ we get 
\begin{align*}
 \kappa \| wx^\top - \bar w \bar x\|_F \leq g(w,x) - g(\bar w, \bar x) & = f_\sigma(X) - f_\sigma(0) \\& \leq \dotp{Y,wx^\top - \bar w \bar x^\top}\\& \leq \|x\|\|Y^\top w\| + |w^\top Y x|
  \leq \|x\|\dist(0, \partial g(w,x)) + |w^\top Y x|,
 \end{align*}
 where the last inequality follows by Cauchy-Schwartz. Applying the same argument using $w^\top Yx \leq \|w\|\|Yx\|$ gives 
 \[ g(w,x) - g(\bar w, \bar x) \leq \min\{\|w\|,\|x\|\}\dist(0, \partial g(w,x)) + |w^\top Y x|.\] 
 Now, let's bound the second term on the right-hand-side. Note that
 \[|\bar w^\top Y \bar x| = |\dotp{Y,wx^\top}| \leq \|Y\|\|wx^\top\| \leq (\sigma_1(Y) + \sigma_2(Y))\norm{\bar w \bar x^\top}.\]
 The result follows immediately.
\end{proof}

We can now prove the main result of this section, a detailed location description of $\varepsilon$-critical points. This can be thought of as a quantitative version of Corollary~\ref{cor:general-landscape}. Its proof is however more involved due to the inexactness of the assumptions.
\begin{theorem}\label{theo:epsilon_critical}
Assume that $\|\bar w\| = \|\bar x\|$ and that there exists a constant $\kappa > 0$ such that \eqref{eq:grow_cond} holds. Further assume that $\sigma_1(Y)$ is bounded by some numerical constant.\footnote{This is implied for example when $f$ is Lipschitz.} Let $\zeta = (Yx, Y^\top w) \in \partial g(w,x)$, and set $\varepsilon = \|\zeta\|.$ Then if $wx^\top =0$ we have that \[\max\{\|Yx\|, \|Y^\top w\|\} \leq \varepsilon, \qquad \text{and} \qquad  \left\{\begin{array}{ll}
	|\dotp{w,\bar w}|& \lesssim \varepsilon\|\bar w\| \\
	|\dotp{x,\bar x}|& \lesssim \varepsilon\|\bar x\| 
	\end{array}  \right. . \] 
On the other hand, if $wx^\top \neq 0$ and $\|(w,x)\| \leq \nu \|(\bar w, \bar x)\|$ for some fixed $\nu > 1$. There exists a constant\footnote{Independent of $\nu$.} $\gamma > 0$ such that if $\varepsilon \leq \gamma\max\{\|w\|, \|x\|\}$ then $\norm{wx^\top} \lesssim \norm{\bar w \bar x^\top}$ and at least one of the following holds
\begin{enumerate}
	\item $$\max\{\norm{w}, \norm{x}\}\norm{wx^\top - \bar w \bar x^\top} \lesssim \varepsilon \norm{\bar w \bar x^\top}$$ 
	\item $$\min\{\|w\|, \|x\|\} \lesssim \varepsilon \qquad \text{and} \qquad \left\{\begin{array}{ll}
	|\dotp{w,\bar w}|& \lesssim \nu^2 \varepsilon\|\bar w\| \\
	|\dotp{x,\bar x}|& \lesssim \nu^2 \varepsilon\|\bar x\| 
	\end{array}  \right..
	$$
	\item $$\sigma_2(Y) \lesssim \frac{\varepsilon}{\max\{\|w\|, \|x\|\}}   \qquad \text{ and } \qquad  \left\{\begin{array}{ll}
	|\dotp{w,\bar w}|& \lesssim \nu^2 \varepsilon\|\bar w\| \\
	|\dotp{x,\bar x}|& \lesssim \nu^2 \varepsilon\|\bar x\| 
	\end{array}  \right. .$$
\end{enumerate}
\end{theorem}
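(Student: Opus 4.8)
The plan is to mirror, in the inexact setting, the case analysis used to prove Theorem~\ref{cor:general-landscape}, but replacing every exact equality ``$=0$'' by an ``$\lesssim\varepsilon$'' estimate and tracking how the errors propagate. I would begin by fixing a pair of orthogonal matrices $U\in O(d_1)$, $V\in O(d_2)$ that simultaneously diagonalize $X=wx^\top-\bar w\bar x^\top$ and $Y\in\partial f_\sigma(X)$ (Theorem~\ref{thm:spectral_sub}), and then invoke the two inequalities from Lemma~\ref{lemma:stationary2}: the ``maximum correlation'' bounds become $|\sigma_i(Y)\dotp{V_i,x}|\le\|Yx\|\le\varepsilon$ and $|\sigma_i(Y)\dotp{U_i,w}|\le\|Y^\top w\|\le\varepsilon$, and the ``objective gap'' inequality, combined with the sharpness hypothesis \eqref{eq:grow_cond} and Lemma~\ref{lemma:lower_bound_singular_values}, gives $\sigma_1(Y)+\sigma_2(Y)\ge\kappa$ whenever $wx^\top\ne\bar w\bar x^\top$. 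Together with the assumption that $\sigma_1(Y)$ is bounded above by a numerical constant, this pins $\sigma_1(Y)$ into a fixed interval $[\kappa',C]$, so that $\sigma_1(Y)$ can be divided through freely; the quantities $|\dotp{V_1,x}|$ and $|\dotp{U_1,w}|$ are then $\lesssim\varepsilon$.

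For the degenerate branch $wx^\top=0$, the matrix $X=-\bar w\bar x^\top$ is exactly rank one, so Lemma~\ref{lemma:partial_derivatives} (applied through \eqref{eq:descY}) forces $\sigma(Y)=(2/\pi,0,\dots,0)$ and $U_1=\pm\bar w/\|\bar w\|$, $V_1=\pm\bar x/\|\bar x\|$ exactly; then $\|Yx\|=\tfrac{2}{\pi\|\bar x\|}|\dotp{x,\bar x}|$ and $\|Y^\top w\|=\tfrac{2}{\pi\|\bar w\|}|\dotp{w,\bar w}|$, and the stated bounds fall out immediately from $\max\{\|Yx\|,\|Y^\top w\|\}\le\varepsilon$. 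For the main branch $wx^\top\ne0$ I would split on $\sigma_2(X)$ just as in the proof of Theorem~\ref{cor:general-landscape}. If $\sigma_2(X)$ is small (comparable to $\varepsilon/\max\{\|w\|,\|x\|\}$ or to $\varepsilon/\sigma_1(Y)$), then $X$ is ``nearly rank one'', and using Lemma~\ref{lemma:rank1char} quantitatively — i.e., $X$ is $\delta$-close to rank one forces $(w,x)$ to be close to a multiple of $(\bar w,0)$ or $(0,\bar x)$ — together with the second lemma of Appendix~\ref{app:approximate} (the one lower-bounding $\dist(0;\partial g)$ by $\tfrac{1}{\min\{\|w\|,\|x\|\}}(\kappa\|wx^\top-\bar w\bar x^\top\|-(\sigma_1(Y)+\sigma_2(Y))\|\bar w\bar x^\top\|)$), one deduces either conclusion~1 (near a solution, $\max\{\|w\|,\|x\|\}\|wx^\top-\bar w\bar x^\top\|\lesssim\varepsilon\|\bar w\bar x^\top\|$) or conclusion~2/3 (one factor small, and near-orthogonality). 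If instead $\sigma_2(X)$ is not small, then from $|\sigma_2(Y)\dotp{V_2,x}|\le\varepsilon$ and the anticorrelation/singular-value identities of Lemma~\ref{lemma:stationary1} I would argue $\sigma_2(Y)$ must itself be $\lesssim\varepsilon/\max\{\|w\|,\|x\|\}$, landing in conclusion~3; the near-orthogonality estimates $|\dotp{w,\bar w}|\lesssim\nu^2\varepsilon\|\bar w\|$, $|\dotp{x,\bar x}|\lesssim\nu^2\varepsilon\|\bar x\|$ then come from the ``correlation'' identity in part~3 of Lemma~\ref{lemma:stationary1}, expanding $\dotp{w,\bar w}=\dotp{U_1,w}\dotp{U_1,\bar w}+\dotp{U_2,w}\dotp{U_2,\bar w}$, bounding the first summand by the $\varepsilon$-estimate on $\dotp{U_1,w}$ and the second by the $\varepsilon$-estimate on $\sigma_2(Y)$ after dividing through, and using $\|(w,x)\|\le\nu\|(\bar w,\bar x)\|$ to turn the norm ratios into the $\nu^2$ prefactor. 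The a priori bound $\|wx^\top\|\lesssim\|\bar w\bar x^\top\|$ should be extracted first, from Lemma~\ref{lemma:lower_bound_singular_values} combined with the lower bound on $\dist(0;\partial g)$ and the hypothesis $\varepsilon\le\gamma\max\{\|w\|,\|x\|\}$ for a suitably small numerical $\gamma$.

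The main obstacle I expect is the bookkeeping in the ``nearly rank one'' case: in the exact proof one uses Lemma~\ref{lemma:rank1char} to conclude $w=\lambda\bar w$ or $x=\lambda\bar x$ and then kills $\lambda$, but with only $\sigma_2(X)\lesssim\delta$ one must produce a \emph{stable} version of that dichotomy — showing that a rank-two matrix whose second singular value is small decomposes as a rank-one piece plus a small perturbation, and then controlling how that perturbation interacts with the gradient certificate $Y$ and with the normalization $\|\bar w\|=\|\bar x\|$. Choosing the right threshold separating ``$\sigma_2(X)$ small'' from ``$\sigma_2(X)$ large'' so that both the $\max\{\|w\|,\|x\|\}$-scaling and the $\nu^2$-dependence come out exactly as stated, while keeping $\gamma$ independent of $\nu$, is the delicate part; everything else is a fairly mechanical propagation of the $\varepsilon$'s through the identities of Lemmas~\ref{lemma:stationary1} and~\ref{lemma:stationary2}.
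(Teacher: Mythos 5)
Your overall architecture -- inexactify the case analysis of Theorem~\ref{cor:general-landscape} using Lemmas~\ref{lemma:stationary1} and~\ref{lemma:stationary2}, pin $\sigma_1(Y)$ into $[\kappa/2,C]$ via Lemma~\ref{lemma:lower_bound_singular_values}, and extract $\|wx^\top\|\lesssim\|\bar w\bar x^\top\|$ first from the $\dist(0;\partial g)$ lower bound -- is exactly the paper's. But there are two genuine gaps. First, the obstacle you flag in the ``nearly rank one'' case is real and your plan does not resolve it: no quantitative version of Lemma~\ref{lemma:rank1char} is available, and the paper never builds one. Instead it organizes the dichotomy by the size of $\min\{\|w\|,\|x\|\}$ (not of $\sigma_2(X)$), and in the regime $\min\{\|w\|,\|x\|\}\lesssim\varepsilon$ it controls the angle between $U_1$ and $\bar w/\|\bar w\|$ by applying a Davis--Kahan-type singular subspace perturbation bound to $A=-\bar w\bar x^\top$ versus $\widehat A=wx^\top-\bar w\bar x^\top$, using that $\|wx^\top\|$ is then small. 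That perturbation theorem is the missing tool; without it (or an equivalent) the ``stable dichotomy'' you ask for does not materialize, and the $\nu^2$ prefactor in conclusion~2 comes precisely from the ratio $\|w\|/\|\bar w\|\le\sqrt2\,\nu$ appearing in that bound. Second, your derivation of the near-orthogonality in conclusion~3 breaks: in the expansion $\dotp{w,\bar w}=\dotp{U_1,w}\dotp{U_1,\bar w}+\dotp{U_2,w}\dotp{U_2,\bar w}$ you propose to bound the second summand ``by the $\varepsilon$-estimate on $\sigma_2(Y)$ after dividing through,'' but $\sigma_2(Y)$ is itself $\lesssim\varepsilon/\max\{\|w\|,\|x\|\}$ in this branch, so dividing $|\sigma_2(Y)\dotp{U_2,w}|\le\varepsilon$ by $\sigma_2(Y)$ yields nothing, and $\dotp{U_2,\bar w}$ is not controlled by $\sigma_2(Y)$ at all. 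The paper instead transfers smallness of $\dotp{U_1,w}$ to smallness of $\dotp{\bar x,V_2}$ through the anticorrelation identity $\dotp{U_1,w}\dotp{x,V_2}=\dotp{U_1,\bar w}\dotp{\bar x,V_2}$, which requires first proving the auxiliary claim $|\dotp{U_2,\bar w}|\le|\dotp{U_1,\bar w}|$ (so that $|\dotp{U_1,\bar w}|\ge\|\bar w\|/\sqrt2$ can be divided out); that claim needs its own two-case contradiction argument that your sketch omits.

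A smaller point: in the branch $wx^\top=0$ you invoke Lemma~\ref{lemma:partial_derivatives} to get $\sigma(Y)=(2/\pi,0,\dots,0)$, but that lemma concerns the specific population function $f(s)=\EE|\sum_i a_ib_is_i|$, whereas Theorem~\ref{theo:epsilon_critical} is stated for a general singular value function satisfying \eqref{eq:grow_cond}. The correct route, which is available to you, is that $X=-\bar w\bar x^\top$ forces $U_1=\pm\bar w/\|\bar w\|$, $V_1=\pm\bar x/\|\bar x\|$ by the simultaneous SVD, and then $\varepsilon\ge\|Y^\top w\|\ge\frac{\sigma_1(Y)}{\|\bar w\|}|\dotp{\bar w,w}|\ge\frac{\kappa}{2\|\bar w\|}|\dotp{\bar w,w}|$ regardless of whether $Y$ is rank one.
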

\begin{proof}
First assume that $wx^\top = 0,$ then it is clear that $\max\{\|Yx\|, \|Y^\top w\|\} = \|\zeta\| \leq \varepsilon$. Without loss of generality assume that $x = 0,$ let $Y = U \sigma(Y) V^\top$ be the singular value decomposition. Since $X = -\bar w \bar x^\top$ then $U_1 = \pm \bar w/ \|\bar w\|$ and $V_1 = \pm \bar x / \norm{\bar x}$ and so
\begin{equation}\label{eq:small_innerproduct} \varepsilon \geq \|Y^\top w\| = \norm{\frac{\sigma_1(Y)}{\|\bar w\|} \dotp{{\bar w}, w}V_1 + z} \geq \frac{\sigma_1(Y)}{\|\bar w\|} \abs{\dotp{{\bar w}, w}} \geq \frac{\kappa}{2\norm{\bar w}} \dotp{{\bar w}, w}\end{equation}
where $z$ is orthogonal to $V_1$ and the second inequality follows by Lemma~\ref{lemma:lower_bound_singular_values}. This proves the first statement in the theorem. 

We know move to the ``On the other hand'' statement, assume $wx^\top \neq 0$ and $\|(w,x)\| \leq \nu \|(\bar w, \bar x)\|$. Notice that the result holds immediately if $(w,x) \in \{(\alpha \bar w, \bar x/\alpha) \mid \alpha \in \RR\}$. Further, due to Theorem~\ref{cor:general-landscape} it also holds when $\varepsilon = 0$. Let us assume that none of these two conditions are satisfied. 

We will start by showing that $\norm{wx^\top} \lesssim \norm{\bar w \bar x^\top}.$ Set \begin{equation}
	\delta = \frac{\sqrt{2}}{\kappa}(\sigma_1(Y)+\sigma_2(Y)) + 1.
\end{equation}
We showed in Lemma~\ref{lemma:lower_bound_singular_values} that $(\sigma_1(Y)+\sigma_2(Y)) \geq \kappa$ and thus $\delta > 1.$
\begin{claim}
The inequality $\norm{wx^\top} \leq \delta \norm{\bar w \bar x^\top}$ holds true.  
\end{claim}
\begin{proof}
Seeking contradiction assume that this is not the case. By the previous lemma 
\begin{equation}\label{eq:distrel}\frac{\sqrt{2}}{{\norm{wx^\top}}}\kappa \norm{wx^\top - \bar w \bar x^\top} - \frac{\varepsilon}{\max\{\|w\|, \|x\|\}} \leq  (\sigma_1(Y) + \sigma_2(Y))\frac{\norm{\bar w \bar x^\top}}{\norm{wx^\top}} .\end{equation}
Notice that 
\[\frac{\sqrt{2}}{{\norm{wx^\top}}}\kappa \norm{wx^\top - \bar w \bar x^\top} = {\sqrt{2}}\kappa \norm{\frac{wx^\top}{{\norm{wx^\top}}} - \frac{\bar w \bar x^\top}{{{\norm{wx^\top}}}}} \geq \sqrt{2}\kappa \abs{1-\frac{1}{\delta}}. \]
If we set $\gamma < \frac{\sqrt{2} \kappa}{2} \abs{1-\frac{1}{\delta}}$ then we ensure $\frac{\varepsilon}{\max\{\|w\|,\|x\|\}}< \frac{\sqrt{2} \kappa}{2} \abs{1-\frac{1}{\delta}},$ thus
\begin{align*}\frac{\sqrt{2} \kappa \abs{1-\frac{1}{\delta}}}{2(\sigma_1(Y)+\sigma_2(Y))} &\leq \frac{1}{(\sigma_1(Y)+\sigma_2(Y))}\left(\frac{\sqrt{2}}{{\norm{wx^\top}}}\kappa \norm{wx^\top - \bar w \bar x^\top} - \frac{\varepsilon}{\max\{\|w\|, \|x\|\}} \right) \\ & \leq \frac{\norm{\bar w \bar x^\top}}{\norm{w x^\top}} < \frac{1}{\delta}.\end{align*}
Rearranging we get
\[|\delta -1| < \frac{\sqrt{2}}{\kappa} (\sigma_1(Y)+\sigma_2(Y)),\]
leading to a contradiction.
\end{proof}

We now move on to proving that at least one of the three conditions has to hold. To this end, define 
\[\rho_1 = \frac{\max\{\|w\|,\|x\|\}}{\sqrt{2}} \qquad \text{and} \qquad  \rho_2 = \frac{1}{\kappa}\max\big\{2\sqrt{2}(1+\delta), 4\sigma_1(Y)\big\} \frac{\norm{\bar w\bar x^\top }}{\max\{\|w\|, \|x\|\}}.\]
Observe that if assume that if $\varepsilon \rho_2 \geq \norm{ wx^\top - \bar w \bar x^\top}$ then the result holds immediately. Assume from now on that $\varepsilon \rho_2 < \norm{ wx^\top - \bar w \bar x^\top}.$ Our road map is as follows, we will start by assuming $\min \{\|w\|, \|x\|\} \leq 2\varepsilon/\kappa$ and we will show that this implies the second condition in item two. Then we will move to assume that $\min \{\|w\|, \|x\|\} > 2\varepsilon/\kappa$ and show that item three has to hold.

Before we continue let us list some important facts. By Lemma~\ref{lemma:stationary2} 
\begin{equation} \label{ineq:critical_products}
	\max\{\sigma_1(Y)|\dotp{V_1,x}|, \sigma_2(Y)|\dotp{V_2,x}|, \sigma_1(Y)|\dotp{U_1,w}|, \sigma_2(Y) |\dotp{U_2,w}|\} \leq \varepsilon
\end{equation}
which together with $\sigma_1(Y) > \kappa/2$ implies that 
\begin{equation} \label{eq:innerproductsU1w}\max\{\abs{\dotp{U_1, w}},\abs{\dotp{V_1, x}}\} \leq \frac{\varepsilon}{\sigma_1(Y)} \leq \frac{2\varepsilon}{\kappa}.\end{equation}
Notice that this implies by Lemma~\ref{lemma:stationary1}
\begin{equation}\label{eq:crossedinnerproducts}|\dotp{U_1, \bar w}\dotp{\bar x, V_2}| = |\dotp{U_1, w} \dotp{x,V_2}| \leq \frac{2\|x\|\varepsilon}{\kappa}\qquad \text{and} \qquad \abs{\dotp{U_2, \bar w}\dotp{\bar x, V_1}} \leq \frac{2\|w\|\varepsilon}{\kappa}.\end{equation}
Observe that 
\begin{equation}\label{bound_individual_norms}\max\{\|w\|, \|x\|\} \leq \|(w,x)\| \leq \nu \|(\bar w, \bar x)\| = \sqrt{2} \nu \min\{\|\bar w\|, \|\bar x\|\}.\end{equation}

We can now continue with the proof. We will now assume that $\min \{\|w\|, \|x\|\} \leq 4\delta \varepsilon/\kappa$ and prove that item two holds.  
\begin{claim}
Assume that $\min \{\|w\|, \|x\|\} \leq 4\delta \varepsilon/\kappa.$ Then
\[|\dotp{w, \bar w}| \lesssim \nu^2 \varepsilon \|\bar w\| \qquad \text{ and } \qquad |\dotp{x, \bar x}| \lesssim \nu^2 \varepsilon \|\bar x\|.\]
\end{claim}
\begin{proof}
Notice 
\begin{align*}
\abs{\dotp{w, \frac{\bar w}{\|\bar w\|}}} \leq \abs{\dotp{w, \frac{\bar w}{\|\bar w\|} - U_1 }} + \abs{\dotp{w, U_1}} \leq \|w\| \norm{\frac{\bar w}{\|\bar w\|} - U_1} + \frac{2\varepsilon}{\kappa}
\end{align*}
where the last inequality follows by Cauchy-Schwartz and \eqref{eq:innerproductsU1w}. A similar argument gives the same bound with $\norm{\bar w/\|\bar w\| + U_1}$ instead. Now we need to make use of the Davis-Kahan Theorem.
\begin{theorem}
Let $A, \widehat A \in \RR^{d_1 \times d_2}$ with $\rank (A) = 1.$ Let $A = U\sigma(A)V^\top$ and $\widehat A = \widehat U \sigma( \widehat A) \widehat V^\top$ be their singular value decompositions. Then the 
\[\sin \theta(V_1, \widehat V_1) \leq \frac{2(2\sigma_1(A) + \|A- \widehat A\|_\op)}{\|A\|^2} \|A-\widehat A\|,\]
the same bound holds for $U_1, \widehat U_1.$
\end{theorem}
By letting $A = - \bar w \bar x^\top$ and $\widehat A = w x^\top- \bar w \bar x^\top $ in the previous theorem we get 
\begin{align*}
\min\left\{\norm{\frac{\bar w}{\|\bar w\|} + U_1}, \norm{\frac{\bar w}{\|\bar w\|} - U_1}\right\} & \leq \sqrt{2} \sin \left(\theta(\bar w/\|\bar w\|, U_1 )\right) \\ 
& \leq 2\sqrt{2} \frac{(2\|\bar w \bar x^\top\| + \|wx^\top\|)}{\|\bar w \bar x^\top\|^2} \|wx^\top\| \\
& \leq 2\sqrt{2}(2 + \delta) \frac{\|wx^\top\|}{\|\bar w \bar x^\top\|} \\
& \leq 2\sqrt{2}(2 + \delta)\nu \frac{\varepsilon}{\|\bar w \|} 
\end{align*}
where the last inequality follows since $\|wx^\top\|\leq \varepsilon \max\{\|w\|, \|x\|\}$ and \eqref{bound_individual_norms}. Hence from the previous inequalities we derive 
\[\abs{\dotp{w, \frac{\bar w}{\|\bar w\|}}} \leq \|w\| \norm{\frac{\bar w}{\|\bar w\|} - U_1} + \frac{2\varepsilon}{\kappa}  \leq 2\sqrt{2}(2 + \delta) \nu\frac{\|w\|}{\|\bar w \|} \varepsilon + \frac{2 \varepsilon}{\kappa} = \left(2\sqrt{2}(2 + \delta) \nu^2 + \frac{2 }{\kappa}\right)\varepsilon.\]
A completely analogous result holds for $|\dotp{x, \bar x}|.$
\end{proof}

Suppose now that $\min\{\|w\|, \|x\|\}> 4\delta \varepsilon/\kappa.$ In the remainder of the proof we will show that in this case, item three has to hold.
\begin{claim}	
The rank of $X = wx^\top - \bar w \bar x^\top$ is two.
\end{claim}	
\begin{proof}
Assume $w = \lambda \bar w$, then $U_1 = \pm w/\|\bar w\|$ and using the same computation as in Equation~\eqref{eq:small_innerproduct} we get $\lambda \|\bar w\| \leq 2\epsilon/\kappa \leq 4\delta \epsilon/\kappa$ which implies $\min\{\|w\|, \|x\|\} \leq 4\delta \varepsilon/\kappa,$ yielding a contradiction. An analogous argument holds for $x = \lambda \bar x.$ Thus, Lemma~\ref{lemma:rank1char} implies that $\sigma_2(wx^\top - \bar w \bar x^\top) > 0.$ 
\end{proof}

\begin{claim} \label{claim:bounds2Y}
$\sigma_2(Y) < \frac{\varepsilon}{\rho_1}.$
\end{claim}
\begin{proof}
Without loss of generality suppose $\|w\| = \max\{\|w\|,\|x\|\}.$ Assume seeking contradiction that this isn't true, thus $\sigma_2(Y) \geq \varepsilon/\rho_1$ then Inequality~\eqref{ineq:critical_products} gives $|\dotp{U_2, w}| \leq \rho_1.$ Furthermore, notice that due to Lemma~\ref{lemma:stationary1} we have that $\|w\|^2 = \dotp{U_1,w}^2 + \dotp{U_2, w}^2$ and consequently $|\dotp{U_1,w}| \geq \sqrt{\|w\|^2 - \rho_1^2}.$ Again, due to~\eqref{ineq:critical_products}
\[\sigma_1(Y) \leq \frac{\varepsilon}{|\dotp{U_1, w}|} \leq \frac{\varepsilon}{\sqrt{\|w\|^2- \rho_1^2}}.\] 
In turn this implies
\begin{align*}
\kappa \varepsilon \rho_2 < \kappa \norm{wx^\top - \bar w \bar x^\top} \leq g(w,x) - g(\bar w, \bar x) & \leq \sigma_1(Y) \sigma_1(X) + \sigma_2(Y)\sigma_2(X) \\ 
&\leq 2 \sigma_1(Y)\sigma_1(X) \\ 
& \leq 2\frac{\varepsilon}{\sqrt{\|w\|^2- \rho_1^2}} |\dotp{U_1, w}\dotp{x, V_1} - \dotp{U_1, \bar w} \dotp{\bar x, V_1}| \\ 
& \leq 2\frac{\varepsilon}{\sqrt{\|w\|^2- \rho_1^2}} \left(\norm{wx^\top} + \norm{\bar w \bar x^\top}\right) \\
& \leq \frac{2\sqrt{2}\varepsilon}{\|w\|}\left(1+\delta\right)\norm{\bar w \bar x^\top} 
\end{align*}
Rearranging we get
\[\rho <\frac{2\sqrt{2}(1+\delta)}{\kappa} \frac{\norm{\bar w\bar x^\top }}{\max\{\|w\|, \|x\|\}}, \]
yielding a contradiction. 
\end{proof}
We now need to prove an additional claim. 
\begin{claim}
$\abs{\dotp{U_2, \bar w}} \leq \abs{\dotp{U_1, \bar w}}$ and $\abs{\dotp{V_2, \bar x}} \leq \abs{\dotp{V_1, \bar x}}$
\end{claim}
\begin{proof}
Seeking contradiction we assume the possible contrary cases. 

\emph{Case 1.} Assume $\abs{\dotp{U_2, \bar w}} > \abs{\dotp{U_1, \bar w}}$ and $\abs{\dotp{V_2, \bar x}} > \abs{\dotp{U_1, \bar x}}$, then\eqref{eq:innerproductsU1w} and \eqref{eq:crossedinnerproducts} imply
\[\max\{|\dotp{U_1, w} \dotp{V_1, x}|,|\dotp{U_1, \bar w} \dotp{V_1, \bar x}|\} \leq \frac{2\min\{\|w\|, \|x\|\}\varepsilon}{\kappa}.\]
	From which we derive
	\begin{align*}
	\kappa \varepsilon \rho_2 < g(w,x) - g(\bar w , \bar x) \leq 2\sigma_1(Y)\sigma_1(X) \leq 4\sigma_1(Y) \delta \frac{\norm{\bar w \bar x^\top}}{\max\{\|w\|,\|x\|\}} \varepsilon.
	\end{align*}
	contradicting the definition of $\rho_2.$

\emph{Case 2.} Assume that $\abs{\dotp{U_2, \bar w}} \leq \abs{\dotp{U_1, \bar w}}$ and $\abs{\dotp{V_2, \bar x}} > \abs{\dotp{V_1, \bar x}}.$ Notice that $\|\bar w\|^2 = \dotp{U_1, \bar w}^2 + \dotp{U_2, \bar w}^2$, hence $\abs{\dotp{U_1, \bar w}} \geq \|\bar w\|/\sqrt{2}$ and similarly $\abs{\dotp{V_2, \bar x}} > \|\bar x\|/\sqrt{2}.$ Thus, 
	\[\frac{\|\bar w\|}{\sqrt{2}} \leq |\dotp{U_1, \bar w}|\leq  \frac{2\|x\|\varepsilon}{\kappa|\dotp{\bar x, V_2}| }<\frac{2\sqrt{2}\|x\|\varepsilon}{\kappa\|\bar x\| }.\]
	This implies that 
	\[ \min\{\|w\|,\|x\|\} \leq \|w\| \leq \delta \frac{\norm{\bar w \bar x^\top}}{\|x\|} < \frac{4\delta \varepsilon}{\kappa},\]
	yielding a contradiction.
\end{proof}

Without loss of generality let us assume $\|w\| \leq \|x\|.$
\begin{claim}
\[\abs{\dotp{w, \bar w}} \lesssim \varepsilon \norm{\bar w}  \qquad \text{and} \qquad \abs{\dotp{x, \bar x}} \lesssim  \nu^2 \varepsilon \norm{\bar x}. \]
\end{claim}
\begin{proof}
By the previous claim and the fact that $\|\bar w\|^2=\dotp{U_1, \bar w}^2 + \dotp{U_2, \bar w}^2$ we get that $|\dotp{U_1, \bar w}| \geq \|w\|/\sqrt{2}$, combining this with \eqref{eq:crossedinnerproducts} gives
\[|\dotp{\bar x, V_2}| \leq \frac{2\sqrt{2}\|x\|\varepsilon}{\kappa \|\bar w\|} \leq \frac{4\delta}{\kappa} \nu \varepsilon\]
Then by Lemma~\ref{lemma:stationary1} 
\begin{align*}
\abs{\dotp{x,\bar x}} = |\dotp{V_1, x} \dotp{V_1, \bar x} + \dotp{V_2, x}\dotp{V_2, \bar x}| & \leq |\dotp{V_1, x} \dotp{V_1, \bar x}| + |\dotp{V_2, x}\dotp{V_2, \bar x}| \\
& \leq \frac{2\varepsilon}{\kappa}\|\bar x\| + \|x\||\dotp{V_2, \bar x}|\\ & \leq \left(\frac{2}{\kappa} + \frac{4\delta}{\kappa}\nu^2\right)\varepsilon\|\bar x\| \leq \left(\frac{2}{\kappa} + \frac{4\delta}{\kappa}\nu^2\right)\varepsilon\|\bar x\| .
\end{align*}
where we used \eqref{bound_individual_norms}. Notice that the same analysis gives 
\[\abs{\dotp{w,\bar w}} \leq \left(\frac{2}{\kappa} + \frac{2\sqrt{2}\delta}{\kappa}\frac{\|w\|}{\|x\|}\right)\varepsilon\|\bar w\| \leq \left(\frac{2}{\kappa} + \frac{2\sqrt{2}\delta}{\kappa}\right)\varepsilon\|\bar w\|.\]
\end{proof}
This last claim finishes the proof of the theorem.
\end{proof}

\section{Proofs of Theorem~\ref{theo:sample_objective}}\label{app:Sample}
In order to prove the theorem we will apply three steps: we will show that the graphs of $\partial f_S$ and $\partial f_P$ are close, then use Theorem~\ref{theo:epsilon_critical} to study the $\epsilon$-critical points of $f_P$ and finally conclude about the landscape of $f_S$ by combining the previous two steps. The following two propositions handle the first part.

\begin{proposition}\label{prop:close_graphs}
Fix two functions $f, g: \RR^{d_1}\times \RR^{d_2} \rightarrow \RR$ such that $g$ is $\rho$-weakly convex. Suppose that there exists a point $(\bar w, \bar x)$ and a real $\delta > 0$ such that the inequality 
\[|f(w,x) - g(w,x)| \leq \delta \norm{wx^\top - \bar w \bar x^\top}_F \qquad \text{holds for all }(w,x) \in \RR^{d_1} \times \RR^{d_2}.\]
Then for any stationary point $(w,x)$ of $g$, there exists a point $(\widehat w, \widehat x)$ satisfying 
\[\left\{\begin{array}{ll}
\norm{(w,x) - (\widehat w, \widehat x)} &\leq 2\sqrt{\frac{\delta \norm{wx^\top - \bar w \bar x^\top}}{\rho+\delta}} \\
\norm{\dist(0, \partial f(\widehat w, \widehat x))} & \leq \left(\delta + \sqrt{2\delta(\rho + \delta)} \right)\left(\norm{(w,x)} + \norm{(\bar w ,\bar x)}\right).\end{array} \right.\]
\end{proposition}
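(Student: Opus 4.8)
The plan is to combine the $\rho$-weak convexity of $g$ with Ekeland's variational principle, in the spirit of the quantitative Attouch--Wets estimate from \cite{davis2017nonsmooth}. Throughout write $z=(w,x)$ for the given stationary point of $g$, $\bar z=(\bar w,\bar x)$, and for $v\in\RR^{d_1}\times\RR^{d_2}$ split $v=(v_w,v_x)$. The starting observation is that since $g$ is $\rho$-weakly convex with $0\in\partial g(z)$, the regularized function $h(v):=g(v)+\tfrac{\rho+\delta}{2}\norm{v-z}^2$ is $\delta$-strongly convex and satisfies $0\in\partial h(z)$; hence $z=\argmin_v h(v)$ and $h(v)\ge h(z)+\tfrac{\delta}{2}\norm{v-z}^2$ for all $v$. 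This is the only place weak convexity enters, and it is what lets us view $z$ as an approximate minimizer of a regularized version of $f$.

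Next I would transfer this to $f$. Put $\Phi(v):=f(v)+\tfrac{\rho+\delta}{2}\norm{v-z}^2=h(v)+(f-g)(v)$ and abbreviate $r:=\norm{wx^\top-\bar w\bar x^\top}_F$. From the hypothesis $\abs{(f-g)(v)}\le\delta\norm{v_wv_x^\top-\bar w\bar x^\top}_F$, the lower bound on $h$, and $\Phi(z)=f(z)\le g(z)+\delta r=h(z)+\delta r$, one gets for every $v$
\[ \Phi(v)\ \ge\ \Phi(z)+\tfrac{\delta}{2}\norm{v-z}^2-\delta\bigl(\norm{v_wv_x^\top-\bar w\bar x^\top}_F+r\bigr). \]
Because $v\mapsto\norm{v_wv_x^\top-\bar w\bar x^\top}_F$ is only locally bounded, I would localize to a closed ball $\bar B_R(z)$, on which $\norm{v_wv_x^\top-\bar w\bar x^\top}_F\le r+(\norm z+R)R$; thus $z$ is an $\varepsilon$-approximate minimizer of $\Phi$ over $\bar B_R(z)$ with $\varepsilon=\delta\bigl(2r+(\norm z+R)R\bigr)$. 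The radius $R$ has to be chosen self-consistently: large enough to exceed the Ekeland step $\sqrt{\varepsilon/(\rho+\delta)}$ used below, yet small enough that $(\norm z+R)R$ is controlled by $r$; one checks a suitable $R$ can be found, after which $\varepsilon\lesssim\delta r$.

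I would then apply Ekeland's variational principle to $\Phi$ (lower semicontinuous) over $\bar B_R(z)$ with parameter $\lambda:=\sqrt{\varepsilon/(\rho+\delta)}$. This produces $\widehat z=(\widehat w,\widehat x)$ with $\norm{\widehat z-z}\le\lambda$, $\Phi(\widehat z)\le\Phi(z)$, and, since $\lambda<R$ forces $\widehat z$ into the interior of the ball, $\dist(0,\partial\Phi(\widehat z))\le\varepsilon/\lambda$. The smooth sum rule $\partial\Phi(\widehat z)=\partial f(\widehat z)+(\rho+\delta)(\widehat z-z)$ then gives
\[ \dist(0,\partial f(\widehat z))\ \le\ \tfrac{\varepsilon}{\lambda}+(\rho+\delta)\lambda\ =\ 2\sqrt{\varepsilon(\rho+\delta)}. \]
Substituting $\varepsilon\lesssim\delta r$ yields $\norm{z-\widehat z}\lesssim\sqrt{\delta r/(\rho+\delta)}$, which is the first bound, and feeding $r\le\tfrac12(\norm z^2+\norm{\bar z}^2)\le\tfrac12(\norm z+\norm{\bar z})^2$ into $2\sqrt{\varepsilon(\rho+\delta)}$ turns the gradient estimate into $\sqrt{2\delta(\rho+\delta)}\,(\norm z+\norm{\bar z})$; the leftover $\delta(\norm z+\norm{\bar z})$ in the stated bound is exactly the first-order slack in the Ekeland perturbation and the sum rule. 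Sharpening the numerical constants to $2$ and $\delta+\sqrt{2\delta(\rho+\delta)}$ is bookkeeping once $R$ is fixed carefully.

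The main obstacle is the non-uniformity of the closeness bound: $\abs{f-g}$ is controlled by $\delta\norm{\,\cdot_w\,\cdot_x^\top-\bar w\bar x^\top}_F$ rather than by a global constant. The obvious first attempt, a plain proximal construction $\widehat z=\argmin_v\{f(v)+\tfrac{\rho+\delta}{2}\norm{v-z}^2\}$, only delivers $\norm{\widehat z-z}^2\lesssim r+\norm{\widehat w\widehat x^\top-\bar w\bar x^\top}_F$, and the last term, once expanded, makes the estimate vacuous; obtaining the sharp $\sqrt{\delta(\,\cdot\,)/(\rho+\delta)}$ scaling genuinely requires the localized Ekeland argument. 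The delicate step is therefore the choice of the ball radius $R$, which must be small enough that the error produced by moving inside $\bar B_R(z)$ does not swamp $r$, yet large enough that the ball still contains the point Ekeland returns.
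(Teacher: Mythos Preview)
Your argument has a genuine gap at the step you flag as ``delicate'': the self-consistent choice of the radius $R$. You need $R$ small enough that $(\|z\|+R)R\lesssim r$ (so that $\varepsilon\lesssim\delta r$) and simultaneously $R>\lambda=\sqrt{\varepsilon/(\rho+\delta)}$ (so that the Ekeland point lies in the interior of the ball and the subdifferential estimate applies). Combining these two requirements forces, roughly, $\sqrt{\delta r/(\rho+\delta)}\lesssim R\lesssim r/\|z\|$, which is only possible when $r\gtrsim \delta\|z\|^2/(\rho+\delta)$. But $r=\|wx^\top-\bar w\bar x^\top\|_F$ can be arbitrarily small relative to $\|z\|^2$ (take $(w,x)$ near a solution), so in general no admissible $R$ exists and the argument breaks down precisely in the regime where the first bound of the proposition is most informative.

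The repair is not bookkeeping: the mechanism has to change. Rather than localizing and controlling the \emph{sup} of $|f-g|$ over a ball, one should subtract the lower envelope $l(v)=-\delta\|v_wv_x^\top-\bar w\bar x^\top\|_F$ and work with $\Psi(v)=f(v)-l(v)+\tfrac{\rho}{2}\|v-z\|^2$. Since $f-l\ge g$ globally, the point $z$ becomes a global $2\delta r$-approximate minimizer of $\Psi$, Ekeland applies with no ball constraint, and the sum rule produces $\dist(0,\partial f(\widehat z))\le 2\delta r/\gamma + \rho\gamma + \lip(l;\widehat z)$ for a free parameter $\gamma>0$. The Lipschitz term $\lip(l;\widehat z)\le\delta(\|\widehat z\|+\|\bar z\|)$ is exactly the additive $\delta(\|z\|+\|\bar z\|)$ you were trying to account for as ``first-order slack,'' and optimizing $\gamma$ gives both bounds with the stated constants. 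This is essentially the content of Theorem~6.1 in \cite{davis2017nonsmooth}, which the paper simply invokes as a black box: it computes $\lip(l;\cdot)\le\delta(\|\cdot\|+\|\bar z\|)$ directly, applies the theorem, and then sets $\gamma=\sqrt{\delta r/(\rho+\delta)}$.
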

\begin{proof}
The proposition is a corollary of Theorem 6.1 of \cite{davis2017nonsmooth}. Recall that for a function $l: \RR^d \rightarrow \overline \RR$ the Lipschitz constant at $\bar y \in \RR^d$ is given by $$ \lip(l,y) := \limsup_{y \rightarrow \bar y} \frac{|l(y)- l(\bar y)|}{|y - \bar y|}.$$ Set $u(x) = \delta \norm{wx^\top - \bar w \bar x^\top}$, and $l(x) = -\delta \norm{wx^\top - \bar w \bar x^\top}$. It is easy to see that at differentiable points the gradient of $l(\cdot)$ is equal to 
\[\nabla l(w,x) = -\frac{\delta}{\norm{wx^\top - \bar w \bar x^\top}_F }\begin{bmatrix}
(wx^\top-\bar w \bar x^\top) x \\
(wx^\top-\bar w \bar x^\top)^\top w 
\end{bmatrix} \implies \norm{\nabla l(w,x)} \leq \delta\|(w,x)\|\]
Then, since $\lip(l;w,x) = \limsup_{(w',x') \rightarrow (w,x)} \|\nabla l(w',x')\|$, we can over estimate $$\lip(l;w,x) \leq \delta\left( \norm{(w,x)} + \norm{(\bar w , \bar x)}\right).$$ Thus applying Theorem 6.1 of \cite{davis2017nonsmooth} we get that for all $\gamma > 0$ there exists $(\widehat w, \widehat x)$ such that $\|(w,x) - (\widehat w, \widehat x)\| \leq 2 \gamma$ and
\[\dist(0, \partial f(\widehat w, \widehat x)) \leq 2\rho \gamma + 2 \delta \frac{\norm{wx^\top - \bar w \bar x^\top}}{\gamma} + \delta\left(\norm{(\widehat w, \widehat x)} + \norm{(\bar w,\bar x)}\right) \]
By the triangular inequality we get $\norm{(\widehat w, \widehat x)} \leq 2\gamma + \norm{(w,x)}$ and therefore 
\[\dist(0, \partial f(\widehat w, \widehat x)) \leq 2(\rho + \delta) \gamma + 2 \delta \frac{\norm{wx^\top - \bar w \bar x^\top}}{\gamma} + \delta \norm{(w,x)}.\]
Hence setting $\gamma = \sqrt{\frac{\delta \norm{wx^\top - \bar w \bar x^\top}}{\rho+\delta}},$ gives
\begin{align*}
\dist(0, \partial f(\widehat w, \widehat x)) & \leq 2\sqrt{\delta(\rho + \delta)\norm{wx^\top - \bar w \bar x^\top}} + 
\delta \left(\norm{(\widehat w, \widehat x)} + \norm{(\bar w,\bar x)}\right)\\
& \leq 2\sqrt{\delta(\rho + \delta)\left(\norm{wx^\top} + \norm{\bar w \bar x^\top}\right)} + 
\delta \left(\norm{(\widehat w, \widehat x)} + \norm{(\bar w,\bar x)}\right)\\
& \leq 2\sqrt{\delta(\rho + \delta)}\left(\sqrt{\norm{wx^\top}} + \sqrt{\norm{\bar w \bar x^\top}}\right) + 
\delta \left(\norm{(\widehat w, \widehat x)} + \norm{(\bar w,\bar x)}\right)\\
& \leq \left(\delta + \sqrt{2\delta(\rho + \delta)} \right)\left(\norm{(w,x)} + \norm{(\bar w ,\bar x)}\right)
\end{align*}
where we used that $\sqrt{a+b} \leq \sqrt{a} + \sqrt{b}$ and $ab \leq (a^2+b^2)/2.$
\end{proof}

\begin{proposition}\label{prop:concentration}
There exist numerical constants $c_1, c_2 > 0$ such that for all $(w,x) \in \RR^{d_1 \times d_2}$ we have 
\begin{equation}\label{concentration}\Big| f_S(w,x) - f_P(w,x)\Big| \lesssim \left( \frac{d_1+d_2 +1}{m} \log\left(\frac{m}{d_1+d_2+1}\right) \right)^{\frac{1}{2}} \|wx^\top - \bar w \bar x^\top \| \;\;\end{equation} 
with probability at least $1- 2\exp(-c_1(d_1+d_2+1))$ provided $m \geq c_2(d_1+ d_2 +1).$
\end{proposition}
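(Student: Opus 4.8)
The plan is to read \eqref{concentration} as a uniform law of large numbers for the family $h_M(a,b) = |a^\top M b|$ indexed by the rank-two matrices $M = wx^\top - \bar w\bar x^\top$. Writing $D = d_1+d_2+1$, we have $f_S(w,x) = \frac1m\sum_{i=1}^m h_M(a_i,b_i)$ and $f_P(w,x) = \EE\, h_M(a,b)$; both sides are positively homogeneous of degree one in $M$, every such $M$ has rank at most two, and $M=0$ is trivial. So after rescaling $M \mapsto M/\opnorm{M}$ it suffices to prove
\[\sup_{M\in\mathcal K}\ \Bigl|\tfrac1m\sum\nolimits_{i=1}^m h_M(a_i,b_i) - \EE\, h_M\Bigr|\ \lesssim\ \Bigl(\tfrac Dm\log\tfrac mD\Bigr)^{1/2}, \qquad \mathcal K := \bigl\{M : \rank M\le 2,\ \opnorm{M}\le 1\bigr\},\]
on the advertised event.

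The first step is to show that $\{h_M\}_{M\in\mathcal K}$ is a sub-exponential process with respect to the operator norm. Conditioning on $a$ gives $a^\top M b \eqd \norm{M^\top a}\,|g|$ for $g\sim\normal(0,1)$ independent of $a$; if $\rank M\le r$, then $M^\top a$ lies in a fixed $r$-dimensional subspace, so $\norm{M^\top a} \le \opnorm{M}\,\chi$ with $\chi$ the Euclidean norm of a standard Gaussian in $\RR^r$. This yields $\|h_M\|_{\psi_1}\lesssim \sqrt r\,\opnorm{M}$, and applying it to $M-M'$ (which has rank at most four when $M,M'\in\mathcal K$), together with $|\,|s|-|t|\,|\le|s-t|$, gives the Lipschitz estimate $\|h_M - h_{M'}\|_{\psi_1}\lesssim\opnorm{M-M'}$. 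It is exactly the rank-two restriction that keeps these $\psi_1$-norms proportional to $\opnorm{\cdot}$ rather than inflated by $\sqrt{d_1}$ or $\sqrt{d_2}$. I would pair this with the standard metric entropy bound $\log N(\mathcal K,\epsilon,\opnorm{\cdot})\lesssim D\log(C/\epsilon)$, obtained by writing $M = p_1q_1^\top + p_2q_2^\top$ with $\|p_j\|,\|q_j\|\le 1$ and covering the four Euclidean balls.

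With these two ingredients, the bound on $\EE\sup_{M\in\mathcal K}\bigl|\tfrac1m\sum_i h_M(a_i,b_i) - \EE h_M\bigr|$ follows from a chaining argument for empirical processes indexed by a class of polynomial metric entropy with sub-exponential increments: truncate each $h_M$ at a level $\tau\asymp\log(m/D)$, control the bounded part via Bernstein's inequality along the nets at each scale together with a peeling over scales, and handle the discarded tail (negligible in expectation) separately. This produces $\EE\sup_{M\in\mathcal K}|\cdots|\lesssim \sqrt{D/m} + (D/m)\log(m/D)\lesssim (\tfrac Dm\log\tfrac mD)^{1/2}$, the last step using $m\gtrsim D$. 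The matching high-probability statement, with failure probability at most $2\exp(-c_1D)$, comes from Talagrand's concentration inequality for suprema of empirical processes applied to the truncated class, whose summands are bounded by $\tau\norm{a_i}\norm{b_i}$ on an event of probability $1-e^{-cm}$; undoing the homogenization recovers \eqref{concentration}.

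The hard part will be the empirical-process step and, specifically, getting the sharp $\log(m/D)$ factor. A single-net union bound is not enough: the map $M\mapsto\frac1m\sum_i|a_i^\top Mb_i|$ has operator-norm Lipschitz constant of order $\sqrt{d_1d_2}$, which forces a polynomially fine net and inserts a spurious $\log D$, yielding only the weaker $\sqrt{\tfrac Dm\log m}$. The correct scale is recovered only via the truncation at level $\asymp\log(m/D)$ --- the crossover between the sub-Gaussian and sub-exponential regimes of Bernstein's inequality given the $e^{\Theta(D\log(1/\epsilon))}$ union bounds performed at each scale --- combined with chaining/peeling, exactly as in the restricted-isometry estimates for rank-one Gaussian measurement ensembles; closely related calculations appear in \cite{charisopoulos2019composite,davis2017nonsmooth}.
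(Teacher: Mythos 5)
Your proposal is sound in outline but takes a genuinely different, and much heavier, route than the paper. The paper's proof is essentially a citation plus a calibration: it invokes the uniform concentration estimate already established inside the proof of Theorem 4.6 of \cite{charisopoulos2019composite} (their Equation (4.19)), namely that for every $t\in(0,c_4)$ one has $|f_S(w,x)-f_P(w,x)|\le\tfrac32\,t\,\|wx^\top-\bar w\bar x^\top\|_F$ uniformly in $(w,x)$ with probability at least $1-2\exp\bigl(c_1 D\log(c_2/t)-c_3t^2m\bigr)$, where $D=d_1+d_2+1$; the only new work is choosing $t\asymp\bigl(\tfrac Dm\log\tfrac mD\bigr)^{1/2}$ and checking that this makes the entropy term $c_1D\log(c_2/t)$ dominated by $c_3t^2m/2$, which yields exactly \eqref{concentration}. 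You instead set out to re-prove the uniform law from scratch. Your reductions are correct: the homogenization to the class $\mathcal K$ of rank-$\le2$ matrices of unit operator norm, the increment bound $\|h_M-h_{M'}\|_{\psi_1}\lesssim\opnorm{M-M'}$ via the rank-$\le4$ observation, the entropy estimate $\log N(\mathcal K,\epsilon)\lesssim D\log(C/\epsilon)$, and the final arithmetic $\sqrt{D/m}+(D/m)\log(m/D)\lesssim\bigl(\tfrac Dm\log\tfrac mD\bigr)^{1/2}$ for $m\gtrsim D$ all check out. What your route buys is self-containedness; what it costs is that the ``chaining with truncation'' step is a program rather than a proof, and its one delicate point is glossed over: after truncating at level $\tau$, the lower deviation of $f_S$ is easy, but the upper deviation requires \emph{uniform} control over $M\in\mathcal K$ of $\frac1m\sum_i(h_M(a_i,b_i)-\tau)_+$, whose summands are not bounded by anything $M$-independent of order $\tau$ (the crude bound is $\|a_i\|\|b_i\|\sim\sqrt{d_1d_2}$), so it cannot be dismissed as ``negligible in expectation''; it needs its own uniform argument. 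That step is precisely the content of the result the paper cites, so either supply it or, more economically, do what the paper does and quote the existing uniform bound, keeping only the optimization over $t$ as the new contribution.
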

\begin{proof}
The proof of this proposition is almost entirely analogous to the proof of Theorem 4.6 in \cite{charisopoulos2019composite} (noting that Gaussian matrices satisfy the hypothesis of this result). The proof follows exactly the same up to Equation (4.19) in the aforementioned paper. Where the authors proved that there exists constants $c_1, c_2, c_3, c_4> 0$ such that for any $t \in (0,c_4)$ the following uniform concentration bound holds 
\begin{equation*} \Big| f_S(w,x) - f_P(w,x)\Big| \leq \frac{3}{2}t\|wx^\top - \bar w \bar x^\top \|_F \;\; \text{for all } (w,x) \in \RR^{d_1 \times d_2}\end{equation*}
with probability at least $1- 2\exp(c_1 (d_1+d_2+1 +1) \log (c_2 /t) - c_3 t^2 m).$ This probability is at least $1-2\exp(-c_3 t^2 m/2)$ provided that 
\begin{equation}
\label{locura}  \frac{d_1+d_2 +1}{m} \leq \frac{c_3t^2}{2c_1\log(c_2/t)}.
\end{equation}
Set $t = \max\left(\sqrt{\frac{2c_1}{c_3}},c_2\right)\left( \frac{d_1+d_2 +1}{m} \log\left(\frac{m}{d_1+d_2+1}\right) \right)^{\frac{1}{2}}$. This choice ensures that \eqref{locura} holds, since  
\begin{align*}
\frac{d_1+d_2+1}{m} \leq \frac{ (d_1+d_2 + 1) \log(m/(d_1+d_2+1))}{m \log \left( \frac{m }{d_1+d_2 +1 } \log^{-1}\left(\frac{m}{d_1+d_2+1}\right) \right)} \leq \frac{c_3 t^2}{2c_1\log(c_2/t)}.   
\end{align*}
We can ensure that $t \in (0,c_4)$ by setting $m \geq C(d_1+d_2+1)$ with $C$ sufficiently large. This proves the result (after relabeling the constants).  
\end{proof}

We are finally in position to proof the theorem. 

\begin{proof}[Proof of Theorem~\ref{theo:sample_objective}]
Fix $v \geq 1$ and a fix point $(w,x)$ satisfying $\|(w,x)\| \leq \nu \|(\bar w, \bar x)\|$. Proposition~\ref{prop:concentration} shows that there exist constants $c_1, c_2 > 0$ such that with probability at least $1- 2\exp(-c_1(d_1+d_2+1))$ we have 
\[ |f_S(w,x) - f_P(w,x)| \leq \tilde \cO\left(\left(\frac{d_1+d_2+1}{m} \right)^{\frac{1}{2}}\right) \|wx^\top - \bar w \bar x^\top\|_F \qquad \forall (w,x) \in \RR^{d_1} \times \RR^{d_2}\]
provided that $m \geq c_2(d_1+d_2+1).$ To ease the notation let us denote $\bar \Delta := \tilde \cO\left(\left(\frac{d_1+d_2+1}{m} \right)^{\frac{1}{2}} \right).$ Assume that we are in the event in which this holds, it is known that $f_S$ is $\rho$-weakly with high probability provided that $m \geq C (d_1+d_2 +1)$, see Section 3 and Theorem 4.6 in \cite{charisopoulos2019composite}. Now, assume that $m$ is big enough and we are in the intersection of this two events. This holds with probability $1-c_3\exp(c_1(d_1+d_2+1))$ (for some possibly different constants $c_1,c_3$). Hence by Proposition~\ref{prop:close_graphs} there exits a point $(\widehat w, \widehat x)$ such that 
\[\|(w,x) - (\widehat w, \widehat x) \| \leq \frac{2}{\sqrt{\rho}}\sqrt{\Delta} D_{wx}\qquad \text{and} \qquad \dist(0, \partial f(\bar w, \bar x)) \leq C\sqrt{\Delta} D_{wx}\]
where $D_{wx} = \|(w,x)\| + \|(\bar w, \bar x)\|.$ 

Notice that if $\|(w,x)\| \leq \bar \Delta^\frac{1}{4} \|(\bar w, \bar x)\|$ holds then the result holds immediately. So assume that this inequality is not satisfied. So we can lower bound 
\[\|(\widehat w, \widehat x)\| \geq \|(w,x)\| - \|(\widehat w, \widehat x) - (w,x)\| \geq \left( 1 - 2{\left(\frac{\bar \Delta}{\rho}\right)}^{\frac{1}{2}}\left(1+\bar \Delta^{-\frac{1}{4}}\right) \right)\|(w,x)\| \geq \frac{1}{2}\|(w,x)\|\]
where the first inequality follow by applying the triangle inequality and the last inequality follows for $m$ sufficiently large, since we can ensure that for such $m$ the term in the parenthesis is bigger than $1/2$. Therefore, 
\begin{align*}
\dist(0, \partial f(\widehat w,\widehat x)) & \leq C {\Delta}^\frac{1}{2}\left(\|(w,x)\| + \|(\bar w, \bar x)\|\right)\\
& \leq C {\Delta}^\frac{1}{2}\left(1 + \Delta^{-\frac{1}{4}}\right)\|(w,x)\| \\
& \leq 2C {\Delta}^\frac{1}{2}\left(1 + \Delta^{-\frac{1}{4}}\right)\|(\widehat w,\widehat x)\| \\ 
& \leq 4 C{\Delta}^\frac{1}{4}\|(\widehat w, \widehat x)\|. 
\end{align*}
Hence, by reducing $\bar \Delta$ if necessary we can guarantee that $\dist (0,\partial f(\widehat w,\widehat x)) \leq \gamma \|(\widehat w, \widehat x)\|$ and consequently Theorem~\ref{theo:epsilon_critical} gives that at least one of the following two holds 
\begin{equation}\label{last}\max\{\|\widehat w\|, \|\widehat x\| \} \|\widehat w \widehat x^\top - \bar w \bar x^\top \| \lesssim  {\Delta}^\frac{1}{2}D_{wx} \|\bar w\bar x^\top\| \qquad \text{and} \qquad \left\{\begin{array}{ll}
	|\dotp{w,\bar w}|& \lesssim \nu^2 {\Delta}^\frac{1}{2}D_{wx} \|\bar w\| \\
	|\dotp{x,\bar x}|& \lesssim \nu^2 {\Delta}^\frac{1}{2}D_{wx}\|\bar x\| 
	\end{array}  \right.\end{equation}
Let us prove that this implies the statement of the theorem. 

\textbf{Case 1.} Assume that the second condition in \eqref{last} holds. Then 
\begin{align*}
\abs{\dotp{w,\bar w}} \leq \abs{\dotp{\widehat w, \bar w}} + \|\bar w\| \|\widehat w - w\| \lesssim (\nu^2 + 1) \Delta^{\frac{1}{2}} D_{wx} \|\bar w\| \lesssim (\nu^2 + 1) \Delta^{\frac{1}{4}} \|(w,x)\| \|\bar w\|
\end{align*}
where we used that $\Delta^{\frac{1}{2}} D_{wx} \lesssim \Delta^{\frac{1}{4}} \|(w,x)\|$ for $m$ big enough. A similar argument yields the result for $|\dotp{w,x}|.$

\textbf{Case 2.} On the other hand, if the first condition holds, there exist $e_w \in \RR^{d_1}, e_x \in \RR^{d_2}$ such that $\widehat w = w + e_w$ and $\widehat x = x + e_x$ with $\|e_w\|, \|e_x\| \leq \Delta^{\frac{1}{2}} D_{wx}.$ Then
\begin{align*}
\|(w,x)\| \| wx^\top - \bar w \bar x^\top\| &\leq \|(w,x)\| \| wx^\top - \widehat w \widehat x^\top\|  + \|(w,x)\|\|\widehat w \widehat x^\top - \bar w \bar x^\top\| \\
& \leq \|(w,x)\| \| wx^\top - \widehat w \widehat x^\top\|  + 2\|(\widehat w,\widehat x)\|\|\widehat w \widehat x^\top - \bar w \bar x^\top\| \\
& \lesssim \|(w,x)\| \| wx^\top -  (w + e_w) ( x + e_x)^\top\|  + {\Delta}^\frac{1}{2}D_{wx} \|\bar w\bar x^\top\| \\
& \leq \|(w,x)\| \left(\|we_x^\top \| + \|e_w x^\top\|+  \|e_w e_x^\top\|\right)  + {\Delta}^\frac{1}{2}D_{wx} \|\bar w\bar x^\top\| \\
& \leq \|(w,x)\| {\Delta}^\frac{1}{2}D_{wx} \left(\|w\| + \| x^\top\|+  {\Delta}^\frac{1}{2}D_{wx} \right)  + {\Delta}^\frac{1}{2}D_{wx} \|\bar w\bar x^\top\| \\
& \lesssim \|(w,x)\| {\Delta}^\frac{1}{2}D_{wx} \left(\|(w,x)\|+  {\Delta}^\frac{1}{4}\|(w,x)\| \right)  + {\Delta}^\frac{1}{2}D_{wx} \|\bar w\bar x^\top\| \\
& \lesssim \|(w,x)\|^2 {\Delta}^\frac{1}{2}D_{wx}  + {\Delta}^\frac{1}{2}D_{wx} \|\bar w\bar x^\top\| \\
& \lesssim (\nu^2 + 1)\|\bar w \bar x^\top\| {\Delta}^\frac{1}{2}D_{wx}  \\
& \lesssim (\nu^2+1)\Delta^\frac{1}{4}\|( w,  x)\| \|\bar w \bar x^\top\|
.
\end{align*}
Proving the desired result.
\end{proof}
\end{document}